\newtheorem{theorem}{Theorem}
\theoremstyle{plain}
\newtheorem{corollary}{Corollary}
\newtheorem{lemma}{Lemma}
\newtheorem{property}{Property}
\newtheorem{proposition}{Proposition}
\newtheorem{remark}{Remark}
\numberwithin{equation}{section}
\begin{document}
\title[A new way to prove LMR]{A new way to prove L'Hospital Monotone Rules
with applications}
\author{Zhen-Hang Yang}
\address{Customer Service Center, ZPEPC Electric Power Research Institute,
Hangzhou, Zhejiang, China, 310009}
\email{yzhkm@163.com}
\date{August 10, 2014}
\subjclass[2010]{Primary 26A48; Secondary 26D05, 33B10, 26E60, }
\keywords{L'Hospital Monotone Rule, Hyperbolic function, Trigonometric
function, Inequality, Mean}
\thanks{This paper is in final form and no version of it will be submitted
for publication elsewhere.}

\begin{abstract}
Let $-\infty \leq a<b\leq \infty $. Let $f$ and $g$ be differentiable
functions on $(a,b)$ and let $g^{\prime }\neq 0$ on $(a,b)$. By introducing
an auxiliary function $H_{f,g}:=\left( f^{\prime }/g^{\prime }\right) g-f$,
we easily prove L'Hoipital rules for monotonicity. This offer a natural and
concise way so that those rules are easier to be understood. Using our
L'Hospital Piecewise Monotone Rules (for short, LPMR), we establish three
new sharp inequalities for hyperbolic and trigonometric functions as well as
bivariate means, which supplement certain known results.
\end{abstract}

\maketitle

\section{Introduction}

As pointed out by Anderson et al in \cite{Anderson-AMM-113(9)-2006} "if one
is attempting to establish the monotonicity of a quotient of two functions,
one often finds that the derivative of the quotient is quite messy and the
process tedious". This inspired mathematicians to find the refinements for
proving monotonicity of quotients. In 1982, the first such refinement was
presented in \cite{Cheeger-JDG-17-1982} by Cheegeret al. which states that

\begin{theorem}
\label{T-Cheeger}If $f$ and $g$ are positive integrable functions on $%
\mathbb{R}$ such that $f/g$ is decreasing, then the function%
\begin{equation*}
x\mapsto \frac{\int_{0}^{x}f(t)dt}{\int_{0}^{x}g(t)dt}
\end{equation*}%
is decreasing.
\end{theorem}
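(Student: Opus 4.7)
The plan is to reduce the statement to a sign analysis of $(F/G)'$, where $F(x):=\int_0^x f(t)\,dt$ and $G(x):=\int_0^x g(t)\,dt$, and then to invoke exactly the auxiliary function $H_{f,g}$ advertised in the abstract, applied to the pair $(F,G)$. The first step is the quotient rule
\[
\left(\frac{F}{G}\right)'(x)=\frac{f(x)G(x)-F(x)g(x)}{G(x)^{2}},
\]
so I only need to control the sign of $fG-Fg$.

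The key step is to observe that $fG-Fg=g\cdot H$, where
\[
H(x):=\frac{f(x)}{g(x)}G(x)-F(x)=H_{F,G}(x).
\]
Since $g>0$ throughout, the sign of $(F/G)'$ matches the sign of $H$. Two properties make $H$ easy to analyze: first, $H(0)=0$; second, a clean cancellation in the differentiation gives
\[
H'(x)=\left(\frac{f}{g}\right)'(x)\,G(x).
\]

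I would then split at $0$. On $(0,\infty)$, one has $G>0$ and $(f/g)'\le 0$ from the decreasing hypothesis, hence $H'\le 0$ and, with $H(0)=0$, $H\le 0$. On $(-\infty,0)$, $G<0$ and $(f/g)'\le 0$ give $H'\ge 0$; combined with $H(0)=0$ this again yields $H\le 0$. In either case $(F/G)'\le 0$, so $F/G$ is decreasing on each connected component of its domain, which is the conclusion.

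The main obstacle is bookkeeping at $0$: the quotient $F/G$ is an indeterminate form there, which forces the two-sided split, and one must separately track that $G$ changes sign across $0$ while $(f/g)'$ does not. A more substantive caveat is that the theorem is stated assuming only that $f/g$ is decreasing, not necessarily differentiable. If one wishes to drop differentiability, the $H$-argument should be replaced by the direct identity $g(x)F(x)-f(x)G(x)=\int_0^x[g(x)f(t)-f(x)g(t)]\,dt$ together with the pointwise comparison $f(t)g(x)\ge f(x)g(t)$ for $t<x$. Since the paper's framework explicitly assumes differentiability, however, the $H_{f,g}$-route above is both natural and thematically aligned with the paper.
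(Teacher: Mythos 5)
The paper never proves Theorem \ref{T-Cheeger}: it is quoted as historical background from Cheeger, Gromov and Taylor \cite{Cheeger-JDG-17-1982}, so there is no in-paper proof to compare against line by line. Measured against the paper's machinery, your argument is exactly the intended one: with $F(x)=\int_0^x f(t)\,dt$, $G(x)=\int_0^x g(t)\,dt$ you form $H_{F,G}=(F'/G')G-F=(f/g)G-F$, use $H_{F,G}(0)=0$, $H_{F,G}'=(f/g)'G$ and $(F/G)'=(G'/G^{2})H_{F,G}$, which are precisely (\ref{H_f,g}), (\ref{dH_f,g}) and (\ref{df/g}); on $(0,\infty)$ this is just Theorem \ref{T-Anderson2}/\ref{T-Pinelis1} applied to $F,G$ with $F(0^{+})=G(0^{+})=0$ and $F'/G'=f/g$ decreasing, and your handling of $(-\infty,0)$, where $G<0$ reverses the monotonicity of $H_{F,G}$ but not the sign conclusion, is Property \ref{P of H2}(ii). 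The sign computations on both components are correct, and restricting the conclusion to the components of the domain (the quotient is a $0/0$ form at $0$) is the right reading of the statement.

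The only genuine issue is the regularity mismatch you flag yourself: the hypotheses give positivity and integrability only, so $F'=f$ and $G'=g$ hold merely almost everywhere, $(f/g)'$ need not exist, and the step ``$H'\le 0$ a.e.\ together with $H(0)=0$ gives $H\le 0$'' requires $H$ to be locally absolutely continuous, which is not automatic when $f/g$ is only monotone. Your fallback identity $g(x)F(x)-f(x)G(x)=\int_0^x\left(g(x)f(t)-f(x)g(t)\right)dt$, combined with $f(t)g(x)\ge f(x)g(t)$ for $t<x$ and the sign of $\int_0^x$ for $x<0$, closes this cleanly (and is in essence the original Cheeger--Gromov--Taylor argument); together with local absolute continuity of $F/G$ away from $0$ it yields the stated monotonicity. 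So the proposal is correct provided that fallback is treated as the actual proof under the stated hypotheses, with the $H_{F,G}$ computation as the differentiable-case shortcut aligned with the paper's framework.
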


In 1993, Anderson et al \cite{Anderson-PJM-160(1)-1993} (also see \cite[%
Theorem 1.25]{Anderson-CIIQM-1997}) developed a rule that applies to a wide
class of quotients of functions by using the Cauchy mean value theorem.

\begin{theorem}[{\protect\cite[Lemma 2.2]{Anderson-PJM-160(1)-1993}}]
\label{T-Anderson}For $-\infty <a<b<\infty $ let $f,g:[a,b]\rightarrow 
\mathbb{R}$ be differentiable functions such that $g^{\prime }(x)\neq 0$ for
each $x\in (a,b)$. If $f^{\prime }/g^{\prime }$ is increasing (decreasing)
on $(a,b)$ then so is $x\mapsto \left( f\left( x\right) -f\left( a\right)
\right) /\left( g\left( x\right) -g\left( a\right) \right) $.
\end{theorem}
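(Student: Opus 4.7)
The plan is to apply the auxiliary function $H_{f,g}:=(f'/g')\,g-f$ announced in the abstract, with a small twist to absorb the basepoint at $a$. First I would replace $f$ by $f-f(a)$ and $g$ by $g-g(a)$; this leaves both $f'/g'$ and the target quotient $(f(x)-f(a))/(g(x)-g(a))$ unchanged, so I may assume $f(a)=g(a)=0$. Under this normalization I set $F(x):=f(x)/g(x)$ and $H(x):=(f'/g')(x)\,g(x)-f(x)$, and I expect everything to follow from two short identities and a sign analysis.

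The first identity is the quotient rule, massaged slightly:
\[
F'(x)=\frac{f'g-fg'}{g^2}=\frac{g'(x)}{g(x)^2}\Bigl[(f'/g')(x)\,g(x)-f(x)\Bigr]=\frac{g'(x)\,H(x)}{g(x)^2},
\]
so the sign of $F'$ is the sign of $g'(x)\,H(x)$. The second identity comes from differentiating $H$ and noting that the product rule produces a term $(f'/g')\cdot g'=f'$ which cancels $-f'$:
\[
H'(x)=\left(\frac{f'}{g'}\right)'(x)\,g(x).
\]
The reduction also supplies the initial value $H(a)=0$.

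The proof then finishes by tracking signs. Because $g'$ never vanishes on $(a,b)$, Darboux's theorem forces $g'$ to have constant sign there; since $g(a)=0$, the function $g$ inherits that same sign on $(a,b)$, so $g'(x)\,g(x)\ge 0$ throughout. If $f'/g'$ is increasing, then $(f'/g')'\ge 0$, so $H'$ shares the sign of $g$, and since $H(a)=0$ we obtain $g(x)\,H(x)\ge 0$; multiplying by $g'(x)/g(x)$ (which is nonnegative) yields $g'\,H\ge 0$, i.e.\ $F'\ge 0$. The decreasing case is symmetric.

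The main obstacle, such as it is, is purely bookkeeping: one must resist collapsing the two sign-cases for $g'$ prematurely, and one must resist the habitual urge to reach for Cauchy's mean value theorem. Once one commits to the auxiliary function $H$ and writes down both the factorization $f'g-fg'=g'\,H$ and the boundary condition $H(a)=0$, the theorem falls out essentially for free, which is exactly the advertised virtue of this approach.
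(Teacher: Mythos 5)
Your overall strategy---normalize so that $f(a)=g(a)=0$ and then control the sign of $(f/g)'$ through the auxiliary function $H_{f,g}=(f'/g')\,g-f$---is exactly the paper's framework, but your execution has a genuine gap: the identity $H'=(f'/g')'\,g$ is available only when $f'/g'$ is itself differentiable, i.e.\ essentially when $f$ and $g$ are twice differentiable. The theorem assumes only one derivative, and the paper is careful about precisely this point: it records $H_{f,g}'=(f'/g')'g$ only under an explicit twice-differentiability hypothesis (part (ii) of Property \ref{P of H1}, and the remark following Property \ref{P of H2}), and in the general case it proves the monotonicity of $H_{f,g}$ by a direct two-point comparison that invokes the Cauchy mean value theorem---the very tool you announce you will resist. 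A ratio $f'/g'$ that is increasing but built from merely differentiable $f,g$ need not be differentiable, and even an almost-everywhere inequality $H'\ge 0$ would not yield monotonicity (or the sign) of $H$ without extra regularity, so your sign-tracking step does not go through under the stated hypotheses.

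A second, smaller soft spot is the initial value: you assert $H(a)=0$ ``from the reduction,'' but the hypothesis only gives $g'\neq 0$ on the open interval, so $g'(a)$ may vanish and $H(a)$ need not even be defined; moreover derivatives need not be continuous, so $H$ need not extend continuously to $a$. What your argument actually requires is $\lim_{x\rightarrow a^{+}}H(x)\geq 0$ (resp.\ $\leq 0$), and justifying that limit leads back either to the Cauchy mean value theorem or to the L'Hospital-type computation of Remark \ref{Remark H(a+)=H(b-)=0}. In fact, once one application of Cauchy's theorem is allowed, the whole proof collapses: after the normalization, for each $x$ there is $\xi_{x}\in(a,x)$ with $f(x)/g(x)=f'(\xi_{x})/g'(\xi_{x})$, hence $H(x)=g(x)\left( \frac{f'(x)}{g'(x)}-\frac{f'(\xi_{x})}{g'(\xi_{x})}\right)$ has the sign of $g$ when $f'/g'$ is increasing, and $(f/g)'=g'H/g^{2}$ finishes the argument. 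That is in substance the paper's route (Property \ref{P of H2} combined with (\ref{sgnd(f/g)})); your route is valid only under the additional assumption of twice differentiability, which the paper flags explicitly as the simpler but more restrictive proof.
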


In \cite{Vamanamurthy-JMAA-183(1)-1994}, Vamanamurthy and Vuorinen proved
that this theorem is also true for the function $x\mapsto \left( f\left(
x\right) -f\left( b\right) \right) /\left( g\left( x\right) -g\left(
b\right) \right) $. The result together with Theorem \ref{T-Anderson} can be
unified as

\begin{theorem}[{\protect\cite[Theorem 2]{Anderson-AMM-113(9)-2006}}]
\label{T-Anderson2}Let $-\infty <a<b<\infty $, and let $f,g:[a,b]\rightarrow 
\mathbb{R}$ be continuous functions that are differentiable on $\left(
a,b\right) $, with $f\left( a\right) =g\left( a\right) =0$ or $f\left(
b\right) =g\left( b\right) =0$. Assume that $g^{\prime }(x)\neq 0$ for each $%
x$ in $(a,b)$. If $f^{\prime }/g^{\prime }$ is increasing (decreasing) on $%
(a,b)$ then so is $f/g$.
\end{theorem}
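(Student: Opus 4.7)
The strategy is to exploit the auxiliary function $H:=H_{f,g}=(f'/g')\,g-f$ announced in the abstract. The quotient and product rules give the two elementary identities
\begin{equation*}
\left(\frac{f}{g}\right)' \;=\; \frac{g'}{g^{2}}\,H,\qquad H' \;=\; \left(\frac{f'}{g'}\right)'\!g,
\end{equation*}
which together reduce the monotonicity of $f/g$ on $(a,b)$ to a sign analysis of $H$, $g$, and $g'$.

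For the background signs, I would first note that since $g'$ is a derivative that never vanishes on $(a,b)$, Darboux's theorem forces $g'$ to keep a constant sign there, so $g$ is strictly monotone; combined with $g(a)=0$ or $g(b)=0$, this makes $g$ itself keep a constant sign on $(a,b)$ and ties the sign of $g'$ to that of $g$. Suppose now that $f'/g'$ is increasing, so $(f'/g')'\geq 0$. Then $H'=(f'/g')'\,g$ inherits the sign of $g$, hence $H$ is monotone on $(a,b)$; and the boundary hypothesis yields $\lim H=0$ at the vanishing endpoint, which I would verify via $H/g=f'/g'-f/g$ together with the classical L'Hospital rule matching the two limits (even when they are $\pm\infty$) before multiplying by $g\to 0$. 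The monotonicity of $H$ together with this endpoint value pins down the sign of $H$ on $(a,b)$.

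Plugging the now-known signs of $g'$ and $H$ into $(f/g)'=g'H/g^{2}$ shows $(f/g)'\geq 0$ in every subcase, so $f/g$ is increasing on $(a,b)$; the decreasing case follows by the same argument with all inequalities reversed. The main obstacle is largely bookkeeping: the four sign combinations (two choices of vanishing endpoint, crossed with the two possible signs of $g$) must be tracked consistently, and the one mildly delicate analytic ingredient is the endpoint limit $H\to 0$ in the case where $f'/g'$ diverges, where one needs L'Hospital's rule to compare $f/g$ and $f'/g'$ before taking the product $g\cdot (f'/g'-f/g)$.
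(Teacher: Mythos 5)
Your overall strategy is exactly the paper's: introduce $H_{f,g}=(f'/g')g-f$, use $(f/g)'=g'H_{f,g}/g^{2}$, determine the sign of $H_{f,g}$ from its monotonicity plus its value at the vanishing endpoint, and track the sign combinations of $g$ and $g'$. However, there is a genuine gap in the step where you get the monotonicity of $H$: you write $H'=(f'/g')'g$ and argue from $(f'/g')'\geq 0$, which presupposes that $f'/g'$ is differentiable (in effect, that $f$ and $g$ are twice differentiable). The theorem assumes only that $f,g$ are differentiable and that $f'/g'$ is \emph{monotone}; no second derivatives exist in general, so the identity $H'=(f'/g')'g$ is not available. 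The paper is careful about precisely this point: its Property 2 proves that $H_{f,g}$ is increasing (when $g>0$ and $f'/g'$ is increasing) directly from the definition, by comparing $H(x_1)$ and $H(x_2)$ and invoking the Cauchy mean value theorem to control $\bigl(f(x_1)-f(x_2)\bigr)/\bigl(g(x_1)-g(x_2)\bigr)$ by a value of $f'/g'$; the derivative computation you use appears there only as a remark covering the smoother case. To close the gap you should replace your $H'$ argument by that Cauchy--MVT comparison (or some other argument not requiring $(f'/g')'$).

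A secondary, smaller issue is the endpoint limit. Writing $H/g=f'/g'-f/g$ and ``matching the two limits even when they are $\pm\infty$'' does not give $H\to 0$ in the divergent case: if, say, $f(a^{+})=g(a^{+})=0$ and $f'/g'\to-\infty$ at $a^{+}$, then $f'/g'-f/g$ is an indeterminate difference and multiplying by $g\to 0$ settles nothing. The paper's Remark does the same computation only implicitly assuming the finite case; in fact, for the theorem you do not need $H\to 0$ at all, only the correct sign of $H$, which follows either from the monotonicity of $H$ together with a one-sided sign bound (e.g.\ $g\cdot(f'/g')\leq 0$ near $a$ in the divergent case), or more directly from the Cauchy MVT applied on $[a,x]$, which gives $f(x)/g(x)=f'(\xi)/g'(\xi)$ and hence the needed inequality between $f/g$ and $f'/g'$ pointwise. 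Either repair is short, but as stated your limit argument is not rigorous in that corner case.
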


Because of the similarity of the hypotheses to those of L'Hospital's rules,
Anderson et al. refers to this result as the "L'Hospital Monotone Rule" (or,
for short, LMR).

Theorem \ref{T-Anderson2} assumes that $a$ and $b$ are finite, but the rule
is true in the case when $a$ or $b$ is infinite. This was first proved by
Pinelis in \cite[Proposition 1.1]{Pinelis-JIPAM-3(1)-2002-5}, and known as
"special-case l'Hospital-type rule for monotonicity" \cite[Proposition 1.2,
Corollary 1.3]{Pinelis-JIPAM-2(3)-2001}.

\begin{theorem}
\label{T-Pinelis1}Let $-\infty \leq a<b\leq \infty $. Let $f$ and $g$ be
differentiable functions on the interval $(a,b)$. Assume also that the
derivative $g^{\prime }$ is nonzero and does not change sign on $(a,b)$.
Suppose that $f(a^{+})=g(a^{+})=0$ or $f(b^{-})=g(b^{-})=0$. If $f^{\prime
}/g^{\prime }$ is increasing (decreasing) on $(a,b)$ then so is $f/g$.
\end{theorem}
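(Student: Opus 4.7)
The plan is to follow the hint in the abstract and work with the auxiliary function $H := (f'/g')g - f$. The two identities
\[
H' \;=\; \left(\frac{f'}{g'}\right)'\! g
\qquad\text{and}\qquad
\left(\frac{f}{g}\right)' \;=\; \frac{g'}{g^{2}}\,H
\]
reduce the whole theorem to understanding the sign of $H$: monotonicity of $f'/g'$ dictates the sign of $H'$, and then the sign of $H$ together with that of $g'$ controls the monotonicity of $f/g$. Before starting I would normalize. Because $g'$ is nonzero of constant sign on $(a,b)$, $g$ is strictly monotone, and combined with $g(a^+)=0$ or $g(b^-)=0$ this forces $g$ to keep constant sign throughout the interval. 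Replacing $f$ by $-f$ interchanges the ``increasing'' and ``decreasing'' alternatives, and the two possible signs of $g'$ are handled in parallel, so it suffices to treat the case $g>0$, $g'>0$, $f'/g'$ increasing, with $f(a^+)=g(a^+)=0$.

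The key step is to show $H\geq 0$ on $(a,b)$ without trying to evaluate the (possibly infinite) endpoint limit of $f'/g'$. For any $a<y<x<b$, Cauchy's mean value theorem produces $\xi\in(y,x)$ with
\[
\frac{f(x)-f(y)}{g(x)-g(y)}\;=\;\frac{f'(\xi)}{g'(\xi)}\;\leq\;\frac{f'(x)}{g'(x)},
\]
the inequality using $\xi<x$ and monotonicity of $f'/g'$. Since $g(x)-g(y)>0$, rearranging and letting $y\to a^+$ (so that $f(y),g(y)\to 0$) yields $f(x)\leq (f'/g')(x)\,g(x)$, i.e.\ $H(x)\geq 0$. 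Substituting this together with $g'>0$ and $g^{2}>0$ into the identity for $(f/g)'$ shows $f/g$ is non-decreasing on $(a,b)$. The alternative endpoint condition $f(b^-)=g(b^-)=0$ is handled by the mirror argument, taking $\xi>x$ and letting $y\to b^-$.

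The main obstacle I anticipate is precisely this limit step at an endpoint that may be $\pm\infty$. The advantage of routing through Cauchy's MVT is that only $f(y),g(y)\to 0$ is actually used, so the possibly infinite endpoint limit of $f'/g'$ never has to be computed. Consequently the argument is insensitive to whether $a$ or $b$ is finite, and the finite-interval reasoning extends verbatim to Pinelis's setting without invoking any separate L'Hospital-type limit rule.
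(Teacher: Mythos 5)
Your route is sound in outline and handles the possibly infinite endpoints correctly, but as written it delivers less than the theorem asserts. In this literature ``increasing'' is strict (compare the strict inequalities throughout the paper's proof of Property \ref{P of H2}, and the fact that non-strict rules are treated by Pinelis in a separate paper), and the paper's own derivation gives $H_{f,g}>0$, hence $(f/g)'>0$, on $(a,b)$. Your anchored estimate loses strictness exactly at the limit step: for each fixed $y$ the Cauchy mean value theorem gives the strict inequality $\frac{f(x)-f(y)}{g(x)-g(y)}<\frac{f'(x)}{g'(x)}$, but after letting $y\to a^{+}$ only $f(x)\le (f'/g')(x)\,g(x)$ survives, i.e.\ $H_{f,g}\ge 0$ and $(f/g)'\ge 0$, so you can conclude only that $f/g$ is non-decreasing --- which is indeed what you state. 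The repair is short: fix $a<y_{0}<x$; your limit argument with $x$ replaced by $y_{0}$ gives $f(y_{0})\le (f'/g')(y_{0})\,g(y_{0})$, while the Cauchy MVT on $[y_{0},x]$ gives the strict bound $f(x)-f(y_{0})<(f'/g')(x)\,\bigl(g(x)-g(y_{0})\bigr)$; combining these and using $g(y_{0})>0$ yields $H_{f,g}(x)>\bigl((f'/g')(x)-(f'/g')(y_{0})\bigr)\,g(y_{0})>0$. (Alternatively: if $f/g$ were constant on a subinterval, there $f'/g'=f/g$ would be constant, contradicting strict monotonicity of $f'/g'$, and a non-decreasing function with no interval of constancy is strictly increasing.)

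Apart from this, your argument takes a genuinely different route from the paper's. The paper first proves (Property \ref{P of H2}) that $H_{f,g}$ inherits the monotonicity of $f'/g'$ when $g>0$ --- its substitute for Pinelis's Key Lemma --- then observes via L'Hospital's rule that the endpoint condition forces $H_{f,g}(a^{+})=0$ (Remark \ref{Remark H(a+)=H(b-)=0}), and reads off the sign of $H_{f,g}$ from its monotonicity. You bypass both steps by anchoring a single Cauchy-MVT estimate at the vanishing endpoint, which uses only $f(y),g(y)\to 0$ and therefore works verbatim when $a$ or $b$ is infinite; this is closer to the classical Anderson--Vamanamurthy--Vuorinen proof and is shorter for this one theorem, but it does not produce the monotonicity of $H_{f,g}$, which is the reusable ingredient the paper needs for its GLMR and LPMR results (Theorems \ref{T-Yang1}, \ref{T-Pinelis4-Ya}, \ref{T-Pinelis4-Yb}).
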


For the case when $f$ and $g$ are probability tail functions, a proof of
Theorem \ref{T-Pinelis1} may be found in \cite{Pinelis-AS-22(1)-1994}. In 
\cite[Proposition 1.2, Corollary 1.3]{Pinelis-JIPAM-2(3)-2001}, a more
general monotonicity rule was proved, which does not require that $f$ and $g$
vanish at an endpoint of the interval.

\begin{theorem}
\label{T-Pinelis2}Let $-\infty \leq a<b\leq \infty $. Let $f$ and $g$ be
differentiable functions on the interval $(a,b)$. Assume also that the
derivative $g^{\prime }$ is nonzero and does not change sign on $(a,b)$.

(i) If $gg^{\prime }>0$ on $\left( a,b\right) $, $\lim_{c\downarrow a}\sup
g^{2}\left( c\right) \left( f/g\right) ^{\prime }\left( c\right) /|g^{\prime
}\left( c\right) |\geq 0$, and $f^{\prime }/g^{\prime }$ is increasing on $%
(a,b)$, then $\left( f/g\right) ^{\prime }>0$ on $(a,b)$.

(ii) If $gg^{\prime }>0$ on $\left( a,b\right) $, $\lim_{c\downarrow a}\inf
g^{2}\left( c\right) \left( f/g\right) ^{\prime }\left( c\right) /|g^{\prime
}\left( c\right) |\leq 0$, and $f^{\prime }/g^{\prime }$ is decreasing on $%
(a,b)$, then $\left( f/g\right) ^{\prime }>0$ on $(a,b)$.

(iii) If $gg^{\prime }<0$ on $\left( a,b\right) $, $\lim_{c\uparrow b}\inf
g^{2}\left( c\right) \left( f/g\right) ^{\prime }\left( c\right) /|g^{\prime
}\left( c\right) |\leq 0$, and $f^{\prime }/g^{\prime }$ is decreasing on $%
(a,b)$, then $\left( f/g\right) ^{\prime }<0$ on $(a,b)$.

(iv) If $gg^{\prime }<0$ on $\left( a,b\right) $, $\lim_{c\uparrow b}\sup
g^{2}\left( c\right) \left( f/g\right) ^{\prime }\left( c\right) /|g^{\prime
}\left( c\right) |\geq 0$, and $f^{\prime }/g^{\prime }$ is increasing on $%
(a,b)$, then $\left( f/g\right) ^{\prime }>0$ on $(a,b)$.
\end{theorem}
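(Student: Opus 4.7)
The plan is to exploit the auxiliary function $H_{f,g} := (f'/g')g - f$ together with two elementary identities. Direct differentiation, with cancellation of the $f'$ terms, yields
\begin{equation*}
H_{f,g}' \;=\; \left(\frac{f'}{g'}\right)' g,
\end{equation*}
so the monotonicity of $H_{f,g}$ is governed jointly by the sign of $g$ and by the monotonicity of $f'/g'$. A short calculation also gives the companion identity
\begin{equation*}
H_{f,g} \;=\; \frac{g^{2}(f/g)'}{g'}, \qquad \text{hence} \qquad \frac{g^{2}(f/g)'}{|g'|} \;=\; \mathrm{sign}(g')\, H_{f,g},
\end{equation*}
which converts the boundary quantity appearing in the hypotheses of (i)--(iv) into a signed multiple of $H_{f,g}$. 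Together, the two identities reduce each part of the theorem to a statement about a monotone function with a one-sided boundary inequality.

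Next, I would organise the four cases in parallel. Since $g'$ has constant sign on $(a,b)$, the condition $gg'>0$ (resp.\ $gg'<0$) forces $g$ itself to have constant sign on $(a,b)$, so each case splits into two subcases according to $\mathrm{sign}(g')$. In every subcase, the sign of $g$ together with the assumed monotonicity of $f'/g'$ fixes the sign of $H_{f,g}'$, hence the direction of monotonicity of $H_{f,g}$; multiplying by $\mathrm{sign}(g')$ then shows that the function
\begin{equation*}
\phi(x) \;:=\; \frac{g^{2}(x)\,(f/g)'(x)}{|g'(x)|}
\end{equation*}
is itself monotone on $(a,b)$ in a direction dictated by the case. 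Because $\phi$ is monotone, its $\limsup$ and $\liminf$ at the relevant endpoint coincide with the honest one-sided limit $L$, so the boundary hypothesis collapses to a comparison of $L$ with $0$.

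Finally, the monotonicity of $\phi$ propagates that boundary sign to the whole interval. In case (i), for instance, the analysis above shows $\phi$ is non-decreasing on $(a,b)$ with $\lim_{c\downarrow a}\phi(c)=L\geq 0$, whence $\phi(x)\geq L\geq 0$ for every $x$; the strict monotonicity of $f'/g'$ upgrades this to $\phi(x)>0$, which is equivalent to $(f/g)'(x)>0$ since $g^{2}/|g'|>0$. Cases (ii)--(iv) are handled by the same template after adjusting the endpoint, the direction in which $\phi$ is monotone, and the side of the boundary inequality according to the sign table. I expect the main obstacle to be purely bookkeeping: tracking the four combinations of $(\mathrm{sign}\,g,\;\mathrm{sign}\,g',\;\text{monotonicity of }f'/g')$ consistently enough that the direction in which $\phi$ is monotone matches the side of the boundary inequality, and being careful to replace $\limsup$ or $\liminf$ by the plain limit only \emph{after} the monotonicity of $\phi$ has been established.
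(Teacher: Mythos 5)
Your overall strategy---pass to the auxiliary function $H_{f,g}=(f^{\prime }/g^{\prime })g-f$, observe that Pinelis's quantity $g^{2}(f/g)^{\prime }/|g^{\prime }|$ equals $\operatorname{sgn}(g^{\prime })\,H_{f,g}$, establish that $H_{f,g}$ is monotone, and then propagate the one-sided boundary sign across $(a,b)$---is exactly the machinery of this paper (Properties \ref{P of H1} and \ref{P of H2} and the proof of Theorem \ref{T-Yang1}); the paper itself only quotes Theorem \ref{T-Pinelis2} from Pinelis rather than proving it, and your outline is the natural proof inside the paper's framework. The genuine gap is at your first step: you derive the monotonicity of $H_{f,g}$ from the identity $H_{f,g}^{\prime }=(f^{\prime }/g^{\prime })^{\prime }g$. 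That identity presupposes that $f^{\prime }/g^{\prime }$ is differentiable, in effect that $f$ and $g$ are twice differentiable, which is not among the hypotheses: the theorem assumes only that $f,g$ are differentiable, that $g^{\prime }\neq 0$ has constant sign, and that $f^{\prime }/g^{\prime }$ is monotone. The paper is explicit about this distinction: formula (\ref{dH_f,g}) is stated only for twice differentiable $f,g$, and the remark following Property \ref{P of H2} notes that the derivative shortcut is available only under that extra smoothness. To close the gap you must prove the monotonicity of $H_{f,g}$ without differentiating it, as in Property \ref{P of H2}: for $x_{1}<x_{2}$ estimate $H_{f,g}(x_{1})-H_{f,g}(x_{2})$ by replacing one ratio $f^{\prime }(x_{i})/g^{\prime }(x_{i})$ by the other using the assumed monotonicity, and compare the slope $\bigl(f(x_{1})-f(x_{2})\bigr)/\bigl(g(x_{1})-g(x_{2})\bigr)$ with $f^{\prime }/g^{\prime }$ at an intermediate point via the Cauchy mean value theorem. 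This two-point argument also delivers \emph{strict} monotonicity of $H_{f,g}$ (hence of your $\phi$), which you need for the strict conclusion $(f/g)^{\prime }\neq 0$; the derivative route with $(f^{\prime }/g^{\prime })^{\prime }\geq 0$ would give only non-strict monotonicity and would require a further argument. Once Property \ref{P of H2} is in place, your remaining steps (monotone $\phi$, $\limsup$/$\liminf$ collapsing to the one-sided limit, and $\operatorname{sgn}(f/g)^{\prime }=\operatorname{sgn}g^{\prime }\operatorname{sgn}H_{f,g}$ as in (\ref{sgnd(f/g)})) go through.

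One bookkeeping warning for the parts you defer to ``the same template'': in part (ii) ($gg^{\prime }>0$, $f^{\prime }/g^{\prime }$ decreasing, $\liminf_{c\downarrow a}\phi (c)\leq 0$) your template makes $\phi$ decreasing with a nonpositive limit at $a^{+}$, so it yields $(f/g)^{\prime }<0$, not $(f/g)^{\prime }>0$ as printed in the statement; the printed sign there is a misprint in the quoted theorem (the conclusion consistent with Pinelis's original is the negative one), so do not distort your case analysis to match it.
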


Almost at the same moment, Pinelis in \cite[Lemma 2.1]%
{Pinelis-JIPAM-7(2)-2006} and Anderson et al. in \cite[Theorem 5]%
{Anderson-AMM-113(9)-2006} showed independently another useful monotonicity
result. Here we quote Pinelis's result as follows. For convenience in what
follows we sometime use the notations "$\nearrow $" and "$\searrow $" to
denote "increasing" and "decreasing", respectively; while "$\nearrow
\searrow $" ("$\searrow \nearrow $") means that there is a $c\in (a,b)$ such
that a given function is increasing (decreasing) on $\left( a,c\right) $ and
decreasing (increasing) on $\left( c,b\right) $.

\begin{proposition}[{\protect\cite[Theorem 5]{Anderson-AMM-113(9)-2006}, 
\protect\cite[Lemma 2.1]{Pinelis-JIPAM-7(2)-2006}}]
\label{P-P-A}Let $-\infty \leq a<b\leq \infty $. Let $f$ and $g$ be
differentiable functions on the interval $(a,b)$. Assume also that the
derivative $g^{\prime }$ is nonzero and does not change sign on $(a,b)$.
Then the monotonicity pattern ($\nearrow $ or $\searrow $) of the function $%
\tilde{\rho}=g^{2}\left( f/g\right) ^{\prime }/|g^{\prime }|$ on $(a,b)$ is
determined by the monotonicity pattern of $\rho =f^{\prime }/g^{\prime }$
and the sign of $gg^{\prime }$, according to the following table%
\begin{equation*}
\begin{array}{c}
\begin{tabular}{|c|c|c|}
\hline
$\rho =f^{\prime }/g^{\prime }$ & $gg^{\prime }$ & $\tilde{\rho}=g^{2}\left(
f/g\right) ^{\prime }/|g^{\prime }|$ \\ \hline
$\nearrow $ & $>0$ & $\nearrow $ \\ \hline
$\searrow $ & $>0$ & $\searrow $ \\ \hline
$\nearrow $ & $<0$ & $\searrow $ \\ \hline
$\searrow $ & $<0$ & $\nearrow $ \\ \hline
\end{tabular}
\\ 
\\ 
\text{Table 1}%
\end{array}%
\end{equation*}
\end{proposition}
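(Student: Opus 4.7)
The plan is to follow the hint in the abstract and introduce the auxiliary function $H_{f,g}:=(f'/g')g-f$. The proof will then reduce to two short identities plus a four-line case analysis.

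First I would rewrite $\tilde\rho$ in terms of $H_{f,g}$. A direct calculation gives
\[
g^{2}(f/g)'=f'g-fg'=g'\!\left(\frac{f'}{g'}g-f\right)=g'\,H_{f,g},
\]
so that $\tilde\rho=g^{2}(f/g)'/|g'|=\operatorname{sgn}(g')\,H_{f,g}$. Since $g'$ is nonzero and does not change sign on $(a,b)$, the factor $\operatorname{sgn}(g')$ is a constant $\pm1$. Hence the monotonicity pattern of $\tilde\rho$ coincides with that of $H_{f,g}$ when $g'>0$ and is its reverse when $g'<0$.

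Next I would differentiate $H_{f,g}$. The product rule yields
\[
H_{f,g}'=\left(\frac{f'}{g'}\right)'\!g+\frac{f'}{g'}\,g'-f'=\rho'\,g,
\]
because the last two terms cancel. Thus the sign of $H_{f,g}'$—and so the monotonicity of $H_{f,g}$—is determined by the signs of $\rho'$ and $g$.

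The final step is a routine matching with Table~1. The hypothesis $gg'>0$ (resp. $gg'<0$) fixes $\operatorname{sgn}(g)=\operatorname{sgn}(g')$ (resp. $\operatorname{sgn}(g)=-\operatorname{sgn}(g')$), and one simply multiplies the three signs $\operatorname{sgn}(\rho')$, $\operatorname{sgn}(g)$, $\operatorname{sgn}(g')$ in each of the four rows. For instance, if $\rho\nearrow$ and $gg'>0$, then $\rho' g$ has the same sign as $g'$, so $H_{f,g}$ and $\operatorname{sgn}(g')\,H_{f,g}=\tilde\rho$ are both increasing; the other three rows are handled identically. There is no genuine obstacle here: once the identities $g^{2}(f/g)'=g'H_{f,g}$ and $H_{f,g}'=\rho' g$ are in hand, the whole proposition is essentially a bookkeeping of signs. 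The only point that needs a word of care is that $g'$ keeps a constant sign on $(a,b)$, which is exactly what is assumed, and that $g$ itself keeps a constant sign—but this follows from the fixed sign of $gg'$ combined with the fixed sign of $g'$.
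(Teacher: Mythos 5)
Your reduction of the proposition to the auxiliary function is exactly the paper's route: the identity $g^{2}(f/g)'=f'g-fg'=g'H_{f,g}$, hence $\tilde\rho=\operatorname{sgn}(g')\,H_{f,g}$, is the paper's equation (\ref{df/g}) together with the remark identifying $\tilde\rho$ with $\pm H_{f,g}$, and your observation that $g$ keeps a constant sign is fine. The genuine gap is in the middle step: you compute $H_{f,g}'=\bigl(f'/g'\bigr)'g=\rho'g$, but the proposition only assumes that $f$ and $g$ are differentiable, so $\rho=f'/g'$ is merely assumed \emph{monotone}; it need not be differentiable (nor need $f',g'$ be), and $\rho'$ may simply fail to exist at many points. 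Monotonicity of $\rho$ gives differentiability only almost everywhere, and passing from ``$H_{f,g}'\ge 0$ a.e.'' back to monotonicity of $H_{f,g}$ requires absolute-continuity-type information you do not have. This is precisely why the paper proves the corresponding statement (Property \ref{P of H2}) without differentiating $\rho$: for $x_{1}<x_{2}$ it estimates the difference $H_{f,g}(x_{1})-H_{f,g}(x_{2})$ directly, replacing $f'(x_{1})/g'(x_{1})$ (or $f'(x_{2})/g'(x_{2})$) by the larger (smaller) value and then invoking the Cauchy mean value theorem to compare $\bigl(f(x_{1})-f(x_{2})\bigr)/\bigl(g(x_{1})-g(x_{2})\bigr)$ with a value of $\rho$ at an intermediate point; the remaining sign patterns are obtained from the symmetry relations (\ref{H-sr}). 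The paper explicitly confines your derivative shortcut to a remark valid only under the \emph{extra} hypothesis that $f$ and $g$ are twice differentiable.

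To repair your argument without strengthening the hypotheses, replace the line ``$H_{f,g}'=\rho'g$'' by the two-point comparison above (or simply add the assumption of twice differentiability, at the cost of proving a weaker statement than the one quoted). A second, smaller point: even in the smooth case, $\rho\nearrow$ only gives $\rho'\ge 0$, so $H_{f,g}'=\rho'g\ge 0$ yields non-strict monotonicity; to recover the strict pattern $\nearrow$ claimed in Table 1 you would still need to note that $\rho'$ cannot vanish on a subinterval while $g\neq 0$, whereas the paper's difference argument delivers strict monotonicity directly.
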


Based on Proposition \ref{P-P-A}, Pinelis easily deduced "refined general
rules for monotonicity" including three assertions in \cite[Corollary 3.1,
3.2, 3.3]{Pinelis-JIPAM-7(2)-2006} etc. So the proposition is called "Key
Lemma" by Pinelis.

In the same paper, Pinelis also proved the derived general rules under the
same special condition that $f(a^{+})=g(a^{+})=0$ or $f(b^{-})=g(b^{-})=0$,
that is, the following assertion.

\begin{theorem}[{\protect\cite[Propositon 4.4]{Pinelis-JIPAM-7(2)-2006}}]
\label{T-Pinelis4}The conditions are the same as Proposition \ref{P-P-A}'s.
If $f(a^{+})=g(a^{+})=0$ or $f(b^{-})=g(b^{-})=0$, then the monotonicity
rules given by the following table are true.%
\begin{equation*}
\begin{array}{c}
\begin{tabular}{|c|c|c|c|c|}
\hline
endpoint condition & $\rho =f^{\prime }/g^{\prime }$ & $\tilde{\rho}\left(
a^{+}\right) $ & $\tilde{\rho}\left( b^{-}\right) $ & $r=f/g$ \\ \hline
$f(a^{+})=g(a^{+})=0$ & $\nearrow \searrow $ &  & $\geq 0$ & $\nearrow $ \\ 
\hline
$f(a^{+})=g(a^{+})=0$ & $\nearrow \searrow $ &  & $<0$ & $\nearrow \searrow $
\\ \hline
$f(a^{+})=g(a^{+})=0$ & $\searrow \nearrow $ &  & $\leq 0$ & $\searrow $ \\ 
\hline
$f(a^{+})=g(a^{+})=0$ & $\searrow \nearrow $ &  & $>0$ & $\searrow \nearrow $
\\ \hline
$f(b^{-})=g(b^{-})=0$ & $\nearrow \searrow $ & $\leq 0$ &  & $\searrow $ \\ 
\hline
$f(b^{-})=g(b^{-})=0$ & $\nearrow \searrow $ & $>0$ &  & $\nearrow \searrow $
\\ \hline
$f(b^{-})=g(b^{-})=0$ & $\searrow \nearrow $ & $\geq 0$ &  & $\nearrow $ \\ 
\hline
$f(b^{-})=g(b^{-})=0$ & $\searrow \nearrow $ & $<0$ &  & $\searrow \nearrow $
\\ \hline
\end{tabular}
\\ 
\\ 
\text{Table 2}%
\end{array}%
\end{equation*}
\end{theorem}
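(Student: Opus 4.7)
The plan is to deduce Theorem \ref{T-Pinelis4} from Proposition \ref{P-P-A} by combining it with a vanishing boundary value for $\tilde\rho$. Since $r'=(f/g)'=\tilde\rho\cdot|g'|/g^{2}$, the sign of $r'$ coincides with the sign of $\tilde\rho$, so for each row of Table~2 it suffices to pin down the sign of $\tilde\rho$ throughout $(a,b)$ from its monotonicity pattern together with its limit values at the two endpoints.

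First I would record two preliminary observations. \emph{(a)} If $g(a^{+})=0$ (respectively $g(b^{-})=0$), then $g$ is monotone on $(a,b)$ (since $g'$ does not change sign) and vanishes at one endpoint; hence $g$ and $g'$ have the same sign throughout $(a,b)$ in the first case and opposite signs in the second. In particular, the endpoint hypothesis already fixes the sign of $gg'$ required in the first column of Table~1. \emph{(b)} Under $f(a^{+})=g(a^{+})=0$, one has $\tilde\rho(a^{+})=0$. To see this, rewrite
\[
\tilde\rho=\mathrm{sign}(g')\,(g\rho-f)=\mathrm{sign}(g')\,g(\rho-r),
\]
apply Cauchy's mean value theorem to get $r(x)\to\rho(a^{+})$, and let $x\downarrow a$. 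A symmetric argument yields $\tilde\rho(b^{-})=0$ under $f(b^{-})=g(b^{-})=0$.

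For each row I would then apply Proposition \ref{P-P-A} on each of the two maximal sub-intervals of monotonicity of $\rho$ and patch the pieces together by continuity of $\tilde\rho$ at the inner turning point. Combined with \emph{(a)}, this transfers the shape of $\rho$ to that of $\tilde\rho$: in rows~1--4, $gg'>0$ preserves the shape, while in rows~5--8, $gg'<0$ reverses it. With the piecewise shape of $\tilde\rho$ on $(a,b)$ known and one of its endpoint values equal to $0$ by \emph{(b)}, the sign of $\tilde\rho$ is then read off from the value at the opposite endpoint. For instance, in row~1, $\tilde\rho=\nearrow\searrow$ with $\tilde\rho(a^{+})=0$ and $\tilde\rho(b^{-})\ge0$ forces $\tilde\rho\ge0$ throughout, hence $r\nearrow$; in row~2, $\tilde\rho(b^{-})<0$ produces a single zero crossing on the decreasing arc and gives $r=\nearrow\searrow$. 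The remaining six rows are dispatched by the same sign chase.

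The main obstacle, and essentially the only delicate point, is justifying $\tilde\rho(a^{+})=0$ when $\rho(a^{+})$ happens to be infinite: one cannot simply substitute $g(a^{+})=0$ into the product $g\rho$. This is handled by the usual L'Hospital argument giving $r(a^{+})=\rho(a^{+})$ (valid also in the infinite case), combined with the identity $\tilde\rho=\mathrm{sign}(g')\,g(\rho-r)$, so that the difference $g(\rho-r)$ still vanishes at $a^{+}$. Once this endpoint zero is in hand and the pattern transfer via Proposition \ref{P-P-A} is established, each row of Table~2 reduces to a short sign analysis of a function with known piecewise shape and prescribed boundary behavior.
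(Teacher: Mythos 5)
Your proposal is correct and follows essentially the same route as the paper's own treatment of this rule (Theorems \ref{T-Pinelis4-Ya} and \ref{T-Pinelis4-Yb}): the endpoint condition fixes the sign of $gg'$ (equivalently of $g$), the piecewise monotonicity pattern of $\rho=f'/g'$ is transferred to the auxiliary function $\tilde{\rho}=\pm H_{f,g}$ (your piecewise use of Proposition \ref{P-P-A} plays exactly the role of Property \ref{P of H2}), the vanishing of $\tilde{\rho}$ at the degenerate endpoint is obtained as in Remark \ref{Remark H(a+)=H(b-)=0}, and each row is read off from $\operatorname{sgn}(f/g)'=\operatorname{sgn}g'\cdot\operatorname{sgn}H_{f,g}$. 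The delicate point you flag (an infinite limit of $\rho$ at the vanishing endpoint) is not really resolved by your appeal to L'Hospital there—$g(\rho-r)$ is then a $0\cdot(\infty-\infty)$ form—but the paper's Remark \ref{Remark H(a+)=H(b-)=0} has the same implicit assumption, and the gap is harmless since only the sign of $\tilde{\rho}$ near that endpoint is needed, which already follows from your Cauchy mean value comparison $r(x)=\rho(\xi_{x})$ with $\xi_{x}$ between the endpoint and $x$.
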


L'Hospital-type rules for monotonicity for "Non-strict" and in the discrete
case can see \cite{Pinelis-JIPAM-8(1)-2007}, \cite{Pinelis-MIA-11(4)-2008}.

The foregoing rules have been widely used very effectively in the study of
some areas, such as elliptic integrals and other special functions in
quasiconformal theory quasiconformal mappings \cite{Anderson-PJM-160(1)-1993}%
, \cite{Anderson-CIIQM-1997}, \cite{Anderson-PA-2001}, \cite%
{Anderson-PJM-192-2000}, \cite{Alzer-JCAM-172-2004}, \cite%
{Chu-AAA-2011-697547}, \cite{Song-JMI-7(4)-2013}, approximation theory \cite%
{Pinelis-JIPAM-3(2)-2002-20}, differential geometry \cite%
{Cheeger-JDG-17-1982}, \cite{Chavel-RG-CUPC-1993}, \cite%
{Gromov-LNM-1200-1986}, \cite{Pinelis-JIPAM-6(4)-2005}, information theory 
\cite{Pinelis-JIPAM-3(1)-2002-5}, \cite{Pinelis-JIPAM-2(3)-2001}, statistics
and probability \cite{Pinelis-AS-22(1)-1994}, \cite{Pinelis-JIPAM-2(3)-2001}%
, \cite{Pinelis-JIPAM-3(1)-2002-5}, \cite{Pinelis-JIPAM-3(2)-2002-20}, \cite%
{Pinelis-JIPAM-3(1)-2002-7}, analytic inequalities \cite%
{Anderson-AMM-113(9)-2006}, \cite{Pinelis-JIPAM-2(3)-2001}, \cite%
{Pinelis-JIPAM-3(1)-2002-5}, \cite{Pinelis-AMM-111-2004}, \cite%
{Zhu-AML-19-2006}, \cite{Zhu-JIA-2007-67430}, \cite{Zhu-CMA-58-2009}, \cite%
{Zhu-AAA-2009-485842}, \cite{Zhu-JIA-2012-303}, \cite{Yang-JMI-7(4)-2013}, 
\cite{Yang-AAA-2014-601839}, etc.

\begin{remark}
The foregoing rules can be divided into two categories: The first one
require the endpoint condition that $f(a^{+})=g(a^{+})=0$ or $%
f(b^{-})=g(b^{-})=0$; the second one dose not require. For unity, we
suggestion that:

(i) Adopting Anderson et al's term, the first category of rules is
collectively called "L'Hospital Monotone Rules", for short, LMR. If the
monotonicity of $f^{\prime }/g^{\prime }$ on $\left( a,b\right) $ has the
pattern that "$\nearrow \searrow $" or "$\searrow \nearrow $", then it is
also called "L'Hospital Piecewise Monotone Rules", for short, LPMR.

(ii) According to Pinelis' term, the second category of rules is
collectively named "General L'Hospital Monotone Rules", for short, GLMR.
\end{remark}

We note that the "Key Lemma" is indeed crucial, as mentioned in \cite%
{Pinelis-JIPAM-7(2)-2006}, however, its two proofs offered by Pinelis in 
\cite{Pinelis-JIPAM-7(2)-2006}, "one proof is short and self-contained, even
if somewhat cryptic; the other proof is longer but apparently more
intuitive".

Because of this, it is meaningful to find a natural and concise way to
expound or prove those L'Hospital rules for monotonicity so that they are
better understood. This is the first aim of this paper. For this end, in
Section 2 we introduce an auxiliary function $H_{f,g}$ that has some simple
and useful properties. By virtue of this auxiliary function we easily
restated or prove "General L'Hospital (Piecewise) Monotone Rules".

Our second aim is to apply "L'Hospital Piecewise Monotone Rules" to
establish some new sharp inequalities for hyperbolic and trigonometric
functions as well as bivariate means. These are supplements to known ones.

\section{A natural and concise way to prove monotone rules}

In this section, we begin with introducing an important auxiliary function.

Let $-\infty \leq a<b\leq \infty $. Let $f$ and $g$ be differentiable
functions on $(a,b)$. Assume also that $g^{\prime }\neq 0$ on $(a,b)$. Then
we define%
\begin{equation}
H_{f,g}:=\frac{f^{\prime }}{g^{\prime }}g-f.  \label{H_f,g}
\end{equation}%
For latter use, we present some properties of this auxiliary function. The
following property is well but simple, so we omit the process of proof.

\begin{property}
\label{P of H1}Let $-\infty \leq a<b\leq \infty $. Let $f$ and $g$ be
differentiable functions on $(a,b)$ and let $g^{\prime }\neq 0$ on $(a,b)$.
Let $H_{f,g}$ be defined on $\left( a,b\right) $ by (\ref{H_f,g}). Then the
following statements are true.

(i) $H_{f,g}$ is even with respect to $g$ and odd with respect to $f$, that
is,%
\begin{equation}
H_{f,g}\left( x\right) =H_{f,-g}\left( x\right) =-H_{-f,g}\left( x\right)
=-H_{-f,-g}\left( x\right) .  \label{H-sr}
\end{equation}

(ii) If $f$ and $g$ are twice differentiable on $(a,b)$, then%
\begin{equation}
H_{f,g}^{\prime }=\left( \frac{f^{\prime }}{g^{\prime }}\right) ^{\prime }g,
\label{dH_f,g}
\end{equation}

(iii) If $g\neq 0$ on $\left( a,b\right) $, then%
\begin{equation}
\left( \frac{f}{g}\right) ^{\prime }=\frac{g^{\prime }}{g^{2}}H_{f,g},
\label{df/g}
\end{equation}%
and therefore,%
\begin{equation}
\func{sgn}\left( \frac{f}{g}\right) ^{\prime }=\func{sgn}g^{\prime }\func{sgn%
}H_{f,g}.  \label{sgnd(f/g)}
\end{equation}
\end{property}

\begin{remark}
By (\ref{df/g}), we have $H_{f,g}=g^{2}\left( f/g\right) ^{\prime
}/g^{\prime }$. It is seen that our auxiliary function $H_{f,g}$ is similar
to Pinelis's function $\tilde{\rho}=g^{2}\left( f/g\right) ^{\prime
}/|g^{\prime }|$, to be exact,%
\begin{equation*}
\tilde{\rho}=H_{f,g}\text{ if }g^{\prime }>0\text{ on }\left( a,b\right) 
\text{ and }\tilde{\rho}=-H_{f,g}\text{ if }g^{\prime }<0\text{ on }\left(
a,b\right) .
\end{equation*}
\end{remark}

The following property of $H_{f,g}$ is very useful in the sequel, crucial
role played by which is similar to the "Key Lemma" presented by Pinelis. But
our proof is natural and simple and so it is easy to be understood.

\begin{property}
\label{P of H2}Let $-\infty \leq a<b\leq \infty $. Let $f$ and $g$ be
differentiable functions on $(a,b)$ and let $g^{\prime }\neq 0$ on $(a,b)$.

(i) If $g>0$ on $\left( a,b\right) $, then $H_{f,g}$ defined on $\left(
a,b\right) $ by (\ref{H_f,g}) is increasing (decreasing) according as $%
f^{\prime }/g^{\prime }$ is increasing (decreasing).

(ii) If $g<0$ on $(a,b)$, then $H_{f,g}$ is decreasing (increasing)
according as $f^{\prime }/g^{\prime }$ is increasing (decreasing).
\end{property}

\begin{proof}
From those symmetry relations (\ref{H-sr}) of $H_{f,g}$, it suffices to
prove that $H_{f,g}$ is increasing on $\left( a,b\right) $ in the case when $%
g>0$ and $f^{\prime }/g^{\prime }$ is increasing on $\left( a,b\right) $.

Let $x_{1}$, $x_{2}\in \left( a,b\right) $ with $x_{1}<x_{2}$. We
distinguish two cases to prove $H_{f,g}\left( x_{1}\right) <H_{f,g}\left(
x_{2}\right) $, that is,%
\begin{equation}
H_{f,g}\left( x_{1}\right) -H_{f,g}\left( x_{2}\right) =\frac{f^{\prime
}\left( x_{1}\right) }{g^{\prime }\left( x_{1}\right) }g\left( x_{1}\right) -%
\frac{f^{\prime }\left( x_{2}\right) }{g^{\prime }\left( x_{2}\right) }%
g\left( x_{2}\right) -\left( f\left( x_{1}\right) -f\left( x_{2}\right)
\right) <0.  \label{Hx1-Hx2}
\end{equation}

Case 1: $g,g^{\prime }>0$ on $\left( a,b\right) $ and $f^{\prime }/g^{\prime
}$ is increasing on $\left( a,b\right) $. Applying $f^{\prime }\left(
x_{1}\right) /g^{\prime }\left( x_{1}\right) <f^{\prime }\left( x_{2}\right)
/g^{\prime }\left( x_{2}\right) $ to the first term in the middle of (\ref%
{Hx1-Hx2}) and next making simple identical transformations yield%
\begin{eqnarray*}
H_{f,g}\left( x_{1}\right) -H_{f,g}\left( x_{2}\right) &<&\frac{f^{\prime
}\left( x_{2}\right) }{g^{\prime }\left( x_{2}\right) }g\left( x_{1}\right) -%
\frac{f^{\prime }\left( x_{2}\right) }{g^{\prime }\left( x_{2}\right) }%
g\left( x_{2}\right) -\left( f\left( x_{1}\right) -f\left( x_{2}\right)
\right) \\
&=&\left( g\left( x_{1}\right) -g\left( x_{2}\right) \right) \left( \frac{%
f^{\prime }\left( x_{2}\right) }{g^{\prime }\left( x_{2}\right) }-\frac{%
f\left( x_{1}\right) -f\left( x_{2}\right) }{g\left( x_{1}\right) -g\left(
x_{2}\right) }\right) <0,
\end{eqnarray*}%
where $\left( g\left( x_{1}\right) -g\left( x_{2}\right) \right) <0$ due to $%
g^{\prime }>0$ on $\left( a,b\right) $, while the increasing property of $%
f^{\prime }/g^{\prime }$ on $\left( a,b\right) $ implies that there exits a $%
\theta \in \left( 0,1\right) $ such that%
\begin{equation*}
\frac{f^{\prime }\left( x_{2}\right) }{g^{\prime }\left( x_{2}\right) }-%
\frac{f\left( x_{1}\right) -f\left( x_{2}\right) }{g\left( x_{1}\right)
-g\left( x_{2}\right) }=\frac{f^{\prime }\left( x_{2}\right) }{g^{\prime
}\left( x_{2}\right) }-\frac{f^{\prime }\left( x_{1}+\theta \left(
x_{2}-x_{1}\right) \right) }{g^{\prime }\left( x_{1}+\theta \left(
x_{2}-x_{1}\right) \right) }>0.
\end{equation*}

Case 2: $g>0,g^{\prime }<0$ on $\left( a,b\right) $ and $f^{\prime
}/g^{\prime }$ is increasing on $\left( a,b\right) $. Similar to Case 1,
using $f^{\prime }\left( x_{2}\right) /g^{\prime }\left( x_{2}\right)
>f^{\prime }\left( x_{1}\right) /g^{\prime }\left( x_{1}\right) $ to the
second term in the middle of (\ref{Hx1-Hx2}) and next making simple
identical transformations give%
\begin{eqnarray*}
H_{f,g}\left( x_{1}\right) -H_{f,g}\left( x_{2}\right) &<&\frac{f^{\prime
}\left( x_{1}\right) }{g^{\prime }\left( x_{1}\right) }g\left( x_{1}\right) -%
\frac{f^{\prime }\left( x_{1}\right) }{g^{\prime }\left( x_{1}\right) }%
g\left( x_{2}\right) -\left( f\left( x_{1}\right) -f\left( x_{2}\right)
\right) \\
&=&\left( g\left( x_{1}\right) -g\left( x_{2}\right) \right) \left( \frac{%
f^{\prime }\left( x_{1}\right) }{g^{\prime }\left( x_{1}\right) }-\frac{%
f\left( x_{1}\right) -f\left( x_{2}\right) }{g\left( x_{1}\right) -g\left(
x_{2}\right) }\right) <0,
\end{eqnarray*}%
where $\left( g\left( x_{1}\right) -g\left( x_{2}\right) \right) >0$ due to $%
g^{\prime }>0$ on $\left( a,b\right) $, while the increasing property of $%
f^{\prime }/g^{\prime }$ on $\left( a,b\right) $ reveals that there exits a $%
\theta \in \left( 0,1\right) $ such that%
\begin{equation*}
\frac{f^{\prime }\left( x_{1}\right) }{g^{\prime }\left( x_{1}\right) }-%
\frac{f\left( x_{1}\right) -f\left( x_{2}\right) }{g\left( x_{1}\right)
-g\left( x_{2}\right) }=\frac{f^{\prime }\left( x_{1}\right) }{g^{\prime
}\left( x_{1}\right) }-\frac{f^{\prime }\left( x_{1}+\theta \left(
x_{2}-x_{1}\right) \right) }{g^{\prime }\left( x_{1}+\theta \left(
x_{2}-x_{1}\right) \right) }<0.
\end{equation*}

By the symmetry relations \ref{H-sr}, we clearly see that:

If $g>0,f^{\prime }/g^{\prime }\searrow $, then $\left( -f\right) ^{\prime
}/g^{\prime }\nearrow $, and then by $H_{f,g}=-H_{-f,g}$ it is seen that $%
H_{f,g}\searrow $.

If $g<0,f^{\prime }/g^{\prime }\nearrow $, then $-g>0$, $\left( -f\right)
^{\prime }/\left( -g\right) ^{\prime }\nearrow $, and then by $H_{f,g}\left(
x\right) =-H_{-f,-g}\left( x\right) $ it is seen that $H_{f,g}\searrow $.

If $g<0,f^{\prime }/g^{\prime }\searrow $, then $-g>0$, $f^{\prime }/\left(
-g\right) ^{\prime }\nearrow $, and then by $H_{f,g}\left( x\right)
=H_{f,-g}\left( x\right) $ it is seen that $H_{f,g}\nearrow .$

This completes the proof.
\end{proof}

\begin{remark}
If $f$ and $g$ are twice differentiable functions on $(a,b)$, then by (ii)
of Property \ref{P of H1} we have 
\begin{equation*}
\func{sgn}H_{f,g}^{\prime }=\func{sgn}\left( \frac{f^{\prime }}{g^{\prime }}%
\right) ^{\prime }\func{sgn}g
\end{equation*}
Thus the proof of Property \ref{P of H2} will be more simple.
\end{remark}

Utilizing Property \ref{P of H2}, we can easily prove our "General
L'Hospital Monotone Rule" that dose not require the endpoint conditions. Due
to the simple relation $f/\left( -g\right) =-\left( f/g\right) $, for
clarity in statement, we assume that $g\left( x\right) >0$ on $\left(
a,b\right) $ in the following theorem.

\begin{theorem}[GLMR]
\label{T-Yang1}Let $-\infty \leq a<b\leq \infty $. Let $f$ and $g$ be
differentiable functions on $(a,b)$ and let $g^{\prime }\neq 0$ and $g>0$ on 
$(a,b)$.

(i) If $g^{\prime }>\left( <\right) 0$ and $f^{\prime }/g^{\prime }$ is
monotone on $\left( a,b\right) $, then $\left( f/g\right) ^{\prime }>\left(
<\right) 0$ on $\left( a,b\right) $ if and only if $\min \left(
H_{f,g}\left( a^{+}\right) ,H_{f,g}\left( b^{-}\right) \right) \geq 0$.

(ii) If $g^{\prime }>\left( <\right) 0$ and $f^{\prime }/g^{\prime }$ is
monotone on $\left( a,b\right) $, then $\left( f/g\right) ^{\prime }<\left(
>\right) 0$ on $\left( a,b\right) $ if and only if $\max \left(
H_{f,g}\left( a^{+}\right) ,H_{f,g}\left( b^{-}\right) \right) \leq 0$.

(iii) If $g^{\prime }>\left( <\right) 0$ and $f^{\prime }/g^{\prime }$ is
increasing on $\left( a,b\right) $, and $H_{f,g}\left( a^{+}\right) <0$ and $%
H_{f,g}\left( b^{-}\right) >0$, then there is a unique $x_{0}\in \left(
a,b\right) $ such that $\left( f/g\right) ^{\prime }<\left( >\right) 0$ on $%
\left( a,x_{0}\right) $ and $\left( f/g\right) ^{\prime }>\left( <\right) 0$
on $\left( x_{0},b\right) $.

(iv) If $g^{\prime }>\left( <\right) 0$ and $f^{\prime }/g^{\prime }$ is
decreasing on $\left( a,b\right) $, and $H_{f,g}\left( a^{+}\right) >0$ and $%
H_{f,g}\left( b^{-}\right) <0$, then there is a unique $x_{0}\in \left(
a,b\right) $ such that $\left( f/g\right) ^{\prime }>\left( <\right) 0$ on $%
\left( a,x_{0}\right) $ and $\left( f/g\right) ^{\prime }<\left( >\right) 0$
on $\left( x_{0},b\right) $.
\end{theorem}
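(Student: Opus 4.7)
The plan is to reduce each assertion to a statement about the sign of $H_{f,g}$ on $(a,b)$ and then exploit that, under the hypotheses, $H_{f,g}$ is a strictly monotone function. Two ingredients do the heavy lifting. First, formula (\ref{df/g}), combined with the standing assumption $g>0$, gives
\begin{equation*}
\func{sgn}\left(\frac{f}{g}\right)'(x)=\func{sgn}g'(x)\cdot\func{sgn}H_{f,g}(x),
\end{equation*}
so that the sign of $(f/g)'$ on $(a,b)$ is completely controlled by the sign of $H_{f,g}$ together with the constant sign of $g'$. Second, Property \ref{P of H2}(i) says that, because $g>0$, the function $H_{f,g}$ inherits the monotonicity type of $f'/g'$.

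For parts (i) and (ii) I would argue as follows. Since $g'$ has constant sign on $(a,b)$, the condition ``$(f/g)'>0$ throughout $(a,b)$'' translates to $H_{f,g}>0$ on $(a,b)$ when $g'>0$, and to $H_{f,g}<0$ on $(a,b)$ when $g'<0$. Because $H_{f,g}$ is strictly monotone, its infimum and supremum on $(a,b)$ are realised, in the extended-real sense, as the one-sided limits $H_{f,g}(a^+)$ and $H_{f,g}(b^-)$. Hence $H_{f,g}>0$ on $(a,b)$ iff $\min\{H_{f,g}(a^+),H_{f,g}(b^-)\}\geq 0$, and $H_{f,g}<0$ on $(a,b)$ iff $\max\{H_{f,g}(a^+),H_{f,g}(b^-)\}\leq 0$. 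Re-expressing these in terms of $(f/g)'$ according to the sign of $g'$ yields exactly (i) and (ii).

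For parts (iii) and (iv) the hypotheses say that $H_{f,g}$ is strictly monotone with endpoint limits of opposite sign. Continuity of $H_{f,g}$ on $(a,b)$, combined with the intermediate value theorem, produces a unique $x_0\in(a,b)$ with $H_{f,g}(x_0)=0$; strict monotonicity then pins down the sign of $H_{f,g}$ on each of $(a,x_0)$ and $(x_0,b)$. Translating back through the sign formula above, and splitting into the two cases $g'>0$ and $g'<0$, yields the piecewise monotonicity pattern of $f/g$ claimed in the statement.

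The main obstacle I anticipate is bookkeeping rather than conceptual: each of the four items silently packages two sub-cases (the parenthesized alternative corresponds to $g'<0$, which flips the relation between $\func{sgn}H_{f,g}$ and $\func{sgn}(f/g)'$), and one must also verify both directions of the ``iff'' in (i) and (ii)---the converse direction relying on the observation that a strictly monotone function on $(a,b)$ cannot remain strictly positive unless both of its endpoint limits are nonnegative.
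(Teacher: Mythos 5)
Your proposal is correct and follows essentially the same route as the paper: reduce everything to the sign of $H_{f,g}$ via the identity $\left(f/g\right)'=\left(g'/g^{2}\right)H_{f,g}$ with $g>0$, invoke Property \ref{P of H2} to transfer the monotonicity of $f'/g'$ to $H_{f,g}$, and then read off the sign of $H_{f,g}$ from its one-sided limits (with a unique zero in cases (iii)--(iv)). Your explicit remark about the converse direction in (i)--(ii) and the intermediate value argument only spell out steps the paper leaves implicit.
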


\begin{proof}
Due to (\ref{sgnd(f/g)}), that is, $\func{sgn}\left( f/g\right) ^{\prime }=%
\func{sgn}g^{\prime }\func{sgn}H_{f,g}$, it suffice to deal with $\func{sgn}%
H_{f,g}\left( x\right) $. By part (i) of Theorem \ref{T-Yang1}, when $g>0$
on $\left( a,b\right) $, if $f^{\prime }/g^{\prime }$ is increasing
(decreasing) on $\left( a,b\right) $, then so is $H_{f,g}$. It follows that 
\begin{equation*}
\func{sgn}H_{f,g}\left( x\right) =\left\{ 
\begin{array}{cc}
\geq 0 & \text{if and only if }\min \left( H_{f,g}\left( a^{+}\right)
,H_{f,g}\left( b^{-}\right) \right) \geq 0, \\ 
\leq 0 & \text{if and only if }\max \left( H_{f,g}\left( a^{+}\right)
,H_{f,g}\left( b^{-}\right) \right) \leq 0,%
\end{array}%
\right.
\end{equation*}%
which implies (i) and (ii) in this proposition.

If $H_{f,g}\left( a^{+}\right) <\left( >\right) 0$ and $H_{f,g}\left(
b^{-}\right) >\left( <\right) 0$, then there is a unique $x_{0}\in \left(
a,b\right) $ such that $H_{f,g}\left( x\right) <\left( >\right) 0$ on $%
\left( a,x_{0}\right) $ and $H_{f,g}\left( x\right) >\left( <\right) 0$ on $%
\left( x_{0},b\right) $. This together with (\ref{sgnd(f/g)}) proves (iii)
and (iv) in this proposition.
\end{proof}

\begin{remark}
\label{Remark H(a+)=H(b-)=0}In Theorem \ref{T-Yang1}, the endpoint condition
that $f(a^{+})=g(a^{+})=0$ implies that $H_{f,g}\left( a^{+}\right) =0$,
because of%
\begin{eqnarray*}
H_{f,g}\left( a^{+}\right) &=&\lim_{x\rightarrow a^{+}}\left( \frac{%
f^{\prime }\left( x\right) }{g^{\prime }\left( x\right) }g\left( x\right)
-f\left( x\right) \right) =\lim_{x\rightarrow a^{+}}\left( \left( \frac{%
f^{\prime }\left( x\right) }{g^{\prime }\left( x\right) }-\frac{f\left(
x\right) }{g\left( x\right) }\right) g\left( x\right) \right) \\
&=&\lim_{x\rightarrow a^{+}}\left( \frac{f^{\prime }\left( x\right) }{%
g^{\prime }\left( x\right) }-\frac{f\left( x\right) }{g\left( x\right) }%
\right) \lim_{x\rightarrow a^{+}}g\left( x\right) =0,
\end{eqnarray*}%
where we have used the L'Hospital rule for indeterminate form $0/0$.
Similarly, if $f(b^{-})=g(b^{-})=0$ then there must be $H_{f,g}\left(
b^{-}\right) =0$.
\end{remark}

\begin{remark}
When $g^{\prime }\left( x\right) >0$ on $\left( a,b\right) $ and $%
f(a^{+})=g(a^{+})=0$, we see that $H_{f,g}\left( a^{+}\right) =0$ and $%
g\left( x\right) >g(a^{+})=0$. By Property \ref{P of H2}, if $f^{\prime
}/g^{\prime }$ is increasing on $\left( a,b\right) $, then so is $H_{f,g}$,
and therefore, $\min \left( H_{f,g}\left( a^{+}\right) ,H_{f,g}\left(
b^{-}\right) \right) =0$. Similarly, if $f(b^{-})=g(b^{-})=0$, then $%
H_{f,g}\left( b^{-}\right) =0$ and $g\left( x\right) >g(b^{-})=0$ according
as $g^{\prime }\left( x\right) <0$ on $\left( a,b\right) $. If $f^{\prime
}/g^{\prime }$ is increasing on $\left( a,b\right) $, then so is $H_{f,g}$,
and hence, $\max \left( H_{f,g}\left( a^{+}\right) ,H_{f,g}\left(
b^{-}\right) \right) =0$.

Employing Property \ref{P of H2}, we can obtain Theorem \ref{T-Pinelis1} in
the case when $g\left( x\right) >0$ on $\left( a,b\right) $. Then by the
relation $f/\left( -g\right) =-\left( f/g\right) $, we can infer Theorem \ref%
{T-Pinelis1} in the case when $g\left( x\right) <0$ on $\left( a,b\right) $.
\end{remark}

Now we state and prove our "L'Hospital Piecewise Monotone Rule" similar to
Theorem \ref{T-Pinelis4}. For own convenience, we distinguish two cases when 
$f(a^{+})=g(a^{+})=0$ and when $f(b^{-})=g(b^{-})=0$ to state and prove them.

\begin{theorem}[LPMR]
\label{T-Pinelis4-Ya}Let $-\infty \leq a<b\leq \infty $ and let $H_{f,g}$ be
defined by (\ref{H_f,g}). Suppose that (i) $f$ and $g$ are differentiable
functions on $(a,b)$; (ii) $g^{\prime }\neq 0$ on $(a,b)$; (ii) $%
f(a^{+})=g(a^{+})=0$; (iv) there is a $c\in \left( a,b\right) $ such that $%
f^{\prime }/g^{\prime }$ is increasing (decreasing) on $\left( a,c\right) $
and decreasing (increasing) on $\left( c,b\right) $. Then

(i) when $\func{sgn}g^{\prime }\func{sgn}H_{f,g}\left( b^{-}\right) \geq
\left( \leq \right) 0$, $f/g$ is increasing (decreasing) on $\left(
a,b\right) $;

(ii) when $\func{sgn}g^{\prime }\func{sgn}H_{f,g}\left( b^{-}\right) <\left(
>\right) 0$, there is a unique number $x_{a}\in \left( a,b\right) $ such
that $f/g$ is increasing (decreasing) on $\left( a,x_{a}\right) $ and
decreasing (increasing) on $\left( x_{a},b\right) $.
\end{theorem}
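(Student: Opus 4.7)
The strategy is to leverage the basic identity $\func{sgn}(f/g)'=\func{sgn}g'\cdot\func{sgn}H_{f,g}$ from (\ref{sgnd(f/g)}), which reduces everything to locating the sign of $H_{f,g}$ on $(a,b)$. Two ingredients feed in: the left-endpoint value $H_{f,g}(a^+)=0$, which is automatic from $f(a^+)=g(a^+)=0$ by Remark \ref{Remark H(a+)=H(b-)=0}; and the piecewise monotonicity of $H_{f,g}$ on $(a,c)$ and on $(c,b)$, which Property \ref{P of H2} delivers directly from the piecewise monotonicity of $f'/g'$ once the sign of $g$ is pinned down.

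First I would reduce to the case $g>0$ on $(a,b)$. Since $g'$ is nonzero on $(a,b)$ and enjoys the intermediate value property (Darboux), $g'$ has constant sign, so $g$ is strictly monotone; together with $g(a^+)=0$ this forces $g$ to have constant sign on $(a,b)$. If $g<0$, I would replace $(f,g)$ by $(-f,-g)$: by (\ref{H-sr}) one has $H_{-f,-g}=-H_{f,g}$, while $(-f)/(-g)=f/g$ and $(-g)'=-g'$, so the statement under $g<0$ is equivalent to the one under $g>0$ after a matching sign flip in both $\func{sgn}g'\cdot\func{sgn}H_{f,g}(b^-)$ and the conclusion. Under this reduction $g'>0$, and the hypothesis $\func{sgn}g'\cdot\func{sgn}H_{f,g}(b^-)\geq 0$ (resp.\ $<0$) simplifies to $H_{f,g}(b^-)\geq 0$ (resp.\ $<0$).

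I would then carry out the ``$\nearrow\searrow$'' case in detail; the ``$\searrow\nearrow$'' case follows by the further symmetry $H_{f,g}=-H_{-f,g}$ applied to $-f$, which flips both the pattern of $f'/g'$ and the monotonicity conclusion for $f/g$, converting each of the parenthesized variants in the theorem to the unparenthesized one. So assume $f'/g'$ is increasing on $(a,c)$ and decreasing on $(c,b)$. By Property \ref{P of H2}(i), $H_{f,g}$ is increasing on $(a,c)$ and decreasing on $(c,b)$; combined with $H_{f,g}(a^+)=0$ this gives $H_{f,g}>0$ on $(a,c]$, and on $[c,b)$ the function $H_{f,g}$ descends strictly from a positive value toward $H_{f,g}(b^-)$. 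If $H_{f,g}(b^-)\geq 0$ then $H_{f,g}>0$ throughout $(a,b)$ and (\ref{sgnd(f/g)}) gives $(f/g)'>0$, which is conclusion (i); if $H_{f,g}(b^-)<0$ then, by continuity and strict monotonicity on $(c,b)$, there is a unique $x_a\in(c,b)$ with $H_{f,g}(x_a)=0$, so $H_{f,g}>0$ on $(a,x_a)$ and $H_{f,g}<0$ on $(x_a,b)$, and (\ref{sgnd(f/g)}) yields conclusion (ii).

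The main obstacle is notational rather than conceptual: I have to keep the four sign combinations (direction of the initial branch of $f'/g'$ crossed with the sign of $g'$) aligned so that each form of the hypothesis on $\func{sgn}g'\cdot\func{sgn}H_{f,g}(b^-)$ matches the intended monotonicity conclusion for $f/g$. The two symmetry reductions above collapse this bookkeeping into the single clean case treated above, after which the argument is just the geometry of a continuous function that starts at $0$, rises, and then falls, with its eventual sign controlled by the value at $b^-$.
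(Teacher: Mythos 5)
Your proposal is correct and follows essentially the same route as the paper: reduce everything to the sign of $H_{f,g}$ via (\ref{sgnd(f/g)}), use $H_{f,g}(a^{+})=0$ from the endpoint condition, and transfer the $\nearrow\searrow$ pattern of $f^{\prime}/g^{\prime}$ to $H_{f,g}$ by Property \ref{P of H2}, so that $H_{f,g}$ starts at $0$, rises, then falls, with its final sign read off from $H_{f,g}(b^{-})$. The only organizational difference is that you collapse the sign bookkeeping by the symmetries $(f,g)\mapsto(-f,-g)$ and $f\mapsto-f$, whereas the paper simply enumerates the eight sign combinations in its Table 3.
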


\begin{proof}
Firstly, the assumption that $g^{\prime }\neq 0$ on $(a,b)$ together with
that $f(a^{+})=g(a^{+})=0$ implies that $g\left( x\right) >\left( <\right)
g\left( a^{+}\right) =0$ if $g^{\prime }>\left( <\right) 0$ on $(a,b)$.

Secondly, the endpoint condition that $f(a^{+})=g(a^{+})=0$ yields $%
H_{f,g}\left( a^{+}\right) =0$ due to Remark \ref{Remark H(a+)=H(b-)=0}.

Thirdly, by Theorem \ref{T-Yang1}, if $g\left( x\right) >0$ on $\left(
a,b\right) $, then $H_{f,g}$ has the same monotonicity with $f^{\prime
}/g^{\prime }$ on $\left( a,b\right) $; while $g\left( x\right) <0$ on $%
\left( a,b\right) $, their monotonicity are reversed.

Fourthly, by the monotonicity of $H_{f,g}$ on $\left( a,b\right) $ together
with the signs of $H_{f,g}\left( a^{+}\right) $ and $H_{f,g}\left(
b^{-}\right) $, we can determine the sign of $H_{f,g}\left( x\right) $, that
is, $\func{sgn}\left( H_{f,g}\right) $.

Lastly, using the formula (\ref{sgnd(f/g)}), that is, $\func{sgn}\left(
f/g\right) ^{\prime }=\func{sgn}g^{\prime }\func{sgn}H_{f,g}$, we deduce the
sign of $\left( f/g\right) ^{\prime }$.

Further details can see Table 3, where "$\left( +,-\right) $" or "$\left(
-,+\right) $" means that there is a number $c\in \left( a,b\right) $ such
that a given function is positive (negative) on $\left( a,c\right) $ and
negative (positive) on $\left( c,b\right) $ (similarly Table 4).%
\begin{equation*}
\begin{array}{l}
\begin{tabular}{|c|c|c|c|c|c|c|c|}
\hline
$f^{\prime }/g^{\prime }$ & $g^{\prime }$ & $g$ & $H_{f,g}$ & $H_{f,g}\left(
a^{+}\right) $ & $H_{f,g}\left( b^{-}\right) $ & $\func{sgn}\left(
H_{f,g}\right) $ & $\func{sgn}\left( f/g\right) ^{\prime }$ \\ \hline
$\nearrow \searrow $ & $+$ & $+$ & $\nearrow \searrow $ & $0$ & $\geq 0$ & $%
+ $ & $+$ \\ \hline
$\nearrow \searrow $ & $+$ & $+$ & $\nearrow \searrow $ & $0$ & $<0$ & $%
\left( +,-\right) $ & $\left( +,-\right) $ \\ \hline
$\searrow \nearrow $ & $+$ & $+$ & $\searrow \nearrow $ & $0$ & $\leq 0$ & $%
- $ & $-$ \\ \hline
$\searrow \nearrow $ & $+$ & $+$ & $\searrow \nearrow $ & $0$ & $>0$ & $%
\left( -,+\right) $ & $\left( -,+\right) $ \\ \hline
$\nearrow \searrow $ & $-$ & $-$ & $\searrow \nearrow $ & $0$ & $\leq 0$ & $%
- $ & $+$ \\ \hline
$\nearrow \searrow $ & $-$ & $-$ & $\searrow \nearrow $ & $0$ & $>0$ & $%
\left( -,+\right) $ & $\left( +,-\right) $ \\ \hline
$\searrow \nearrow $ & $-$ & $-$ & $\nearrow \searrow $ & $0$ & $\geq 0$ & $%
+ $ & $-$ \\ \hline
$\searrow \nearrow $ & $-$ & $-$ & $\nearrow \searrow $ & $0$ & $<0$ & $%
\left( +,-\right) $ & $\left( -,+\right) $ \\ \hline
\end{tabular}
\\ 
\multicolumn{1}{c}{} \\ 
\multicolumn{1}{c}{\text{Table 3: when }f(a^{+})=g(a^{+})=0}%
\end{array}%
\end{equation*}%
Thus we complete the proof.
\end{proof}

Similarly, according to Table 4, we can state and prove Theorem \ref%
{T-Pinelis4-Yb}, whose details are omitted.%
\begin{equation*}
\begin{array}{l}
\begin{tabular}{|c|c|c|c|c|c|c|c|}
\hline
$f^{\prime }/g^{\prime }$ & $g^{\prime }$ & $g$ & $H_{f,g}$ & $H_{f,g}\left(
a^{+}\right) $ & $H_{f,g}\left( b^{-}\right) $ & $\func{sgn}\left(
H_{f,g}\right) $ & $\func{sgn}\left( f/g\right) ^{\prime }$ \\ \hline
$\nearrow \searrow $ & $+$ & $-$ & $\searrow \nearrow $ & $\leq 0$ & $0$ & $%
- $ & $-$ \\ \hline
$\nearrow \searrow $ & $+$ & $-$ & $\searrow \nearrow $ & $>0$ & $0$ & $%
\left( +,-\right) $ & $\left( +,-\right) $ \\ \hline
$\searrow \nearrow $ & $+$ & $-$ & $\nearrow \searrow $ & $\geq 0$ & $0$ & $%
+ $ & $+$ \\ \hline
$\searrow \nearrow $ & $+$ & $-$ & $\nearrow \searrow $ & $<0$ & $0$ & $%
\left( -,+\right) $ & $\left( -,+\right) $ \\ \hline
$\nearrow \searrow $ & $-$ & $+$ & $\nearrow \searrow $ & $\geq 0$ & $0$ & $%
+ $ & $-$ \\ \hline
$\nearrow \searrow $ & $-$ & $+$ & $\nearrow \searrow $ & $<0$ & $0$ & $%
\left( -,+\right) $ & $\left( +,-\right) $ \\ \hline
$\searrow \nearrow $ & $-$ & $+$ & $\searrow \nearrow $ & $\leq 0$ & $0$ & $%
- $ & $+$ \\ \hline
$\searrow \nearrow $ & $-$ & $+$ & $\searrow \nearrow $ & $>0$ & $0$ & $%
\left( +,-\right) $ & $\left( -,+\right) $ \\ \hline
\end{tabular}
\\ 
\multicolumn{1}{c}{} \\ 
\multicolumn{1}{c}{\text{Table 4: when }f(b^{-})=g(b^{-})=0}%
\end{array}%
\end{equation*}

\begin{theorem}[LPMR]
\label{T-Pinelis4-Yb}Let $-\infty \leq a<b\leq \infty $ and let $H_{f,g}$ be
defined by (\ref{H_f,g}). Suppose that (i) $f$ and $g$ are differentiable
functions on $(a,b)$; (ii) $g^{\prime }$ never vanishes on $(a,b)$; (ii) $%
f(b^{-})=g(b^{-})=0$; (iv) there is a $c\in \left( a,b\right) $ such that $%
f^{\prime }/g^{\prime }$ is increasing (decreasing) on $\left( a,c\right) $
and decreasing (increasing) on $\left( c,b\right) $. Then

(i) when $g^{\prime }>0$ and $H_{f,g}\left( a^{+}\right) \leq \left( \geq
\right) 0$, or $g^{\prime }<0$ and $H_{f,g}\left( a^{+}\right) \geq \left(
\leq \right) 0$, $f/g$ is decreasing (increasing) on $\left( a,b\right) $;

(ii) when $g^{\prime }>0$ and $H_{f,g}\left( a^{+}\right) >\left( <\right) 0$%
, or $g^{\prime }<0$ and $H_{f,g}\left( a^{+}\right) <\left( >\right) 0$,
there is a unique number $x_{b}\in \left( a,b\right) $ such that $f/g$ is
increasing (decreasing) on $\left( a,x_{b}\right) $ and decreasing
(increasing) on $\left( x_{b},b\right) $.
\end{theorem}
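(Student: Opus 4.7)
The plan is to mirror the proof of Theorem \ref{T-Pinelis4-Ya} exactly, with the roles of $a$ and $b$ interchanged, and then read off the conclusions from the already-provided Table 4. The four-step pipeline is: (a) determine the sign of $g$ on $(a,b)$ from the sign of $g'$ and the right-endpoint condition; (b) pin down $H_{f,g}(b^{-})=0$; (c) transfer the piecewise monotonicity pattern of $f'/g'$ to $H_{f,g}$ via Property \ref{P of H2}, preserving or reversing it according to $\func{sgn} g$; (d) combine the piecewise monotonicity of $H_{f,g}$ with the value $H_{f,g}(b^{-})=0$ and the hypothesis on $H_{f,g}(a^{+})$ to determine $\func{sgn} H_{f,g}$ on $(a,b)$, and finally convert it to $\func{sgn}(f/g)'$ via (\ref{sgnd(f/g)}).

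For step (a), since $g'$ is continuous and nonvanishing on $(a,b)$ it has constant sign there; together with $g(b^{-})=0$ this forces $g<0$ on $(a,b)$ when $g'>0$, and $g>0$ on $(a,b)$ when $g'<0$. Step (b) is immediate from the L'Hospital computation displayed in Remark \ref{Remark H(a+)=H(b-)=0}, which works verbatim as $x\to b^{-}$. Step (c) is the conceptual core: by Property \ref{P of H2}, $H_{f,g}$ shares the monotonicity of $f'/g'$ on any subinterval where $g>0$ and reverses it where $g<0$. Since $f'/g'$ is $\nearrow\searrow$ (or $\searrow\nearrow$) with break at $c$, the same break point $c$ carries over to $H_{f,g}$, and the global piecewise pattern is either preserved or flipped according to the sign of $g$ identified in step (a).

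Step (d) is where the case analysis lives. In every subcase $H_{f,g}$ is piecewise monotone on $(a,b)$ with $H_{f,g}(b^{-})=0$, so its sign on $(c,b)$ is determined purely by the direction of monotonicity on $(c,b)$ (hence has a definite sign on all of $(c,b)$), while its sign on $(a,c)$ is pinned down jointly by the monotonicity direction on $(a,c)$ and the sign of $H_{f,g}(a^{+})$. When these two one-sided signs agree, $H_{f,g}$ keeps one sign on all of $(a,b)$ and $f/g$ is monotone, giving part (i). When they disagree, piecewise monotonicity forces a single interior zero $x_{b}$ of $H_{f,g}$, on either side of which $H_{f,g}$ has opposite signs, and (\ref{sgnd(f/g)}) translates this into the $\nearrow\searrow$ or $\searrow\nearrow$ behaviour of $f/g$ asserted in part (ii). Multiplying the outcome of each row by $\func{sgn} g'$ converts the eight possible $H_{f,g}$-patterns into the eight $(f/g)'$-patterns tabulated in Table 4.

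The main obstacle is bookkeeping rather than mathematics: one must be disciplined in step (c) about which of the patterns $\nearrow\searrow$, $\searrow\nearrow$ gets flipped, and, in step (d), about which of the two boundary values of $H_{f,g}$ forces a uniform sign versus a sign change. Once Table 4 is filled in row by row in this order, reading off the statements (i) and (ii) for both the increasing and decreasing alternatives becomes purely mechanical.
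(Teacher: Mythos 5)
Your proposal is correct and follows essentially the same route as the paper: the paper proves Theorem \ref{T-Pinelis4-Ya} by exactly this pipeline (sign of $g$ from $g'$ and the endpoint condition, $H_{f,g}=0$ at the vanishing endpoint via Remark \ref{Remark H(a+)=H(b-)=0}, transfer of the piecewise monotonicity to $H_{f,g}$ via Property \ref{P of H2}, then the sign bookkeeping through (\ref{sgnd(f/g)})), and it explicitly states that Theorem \ref{T-Pinelis4-Yb} is obtained ``similarly, according to Table 4,'' which is precisely your row-by-row case analysis. The only cosmetic remark is that constancy of the sign of $g'$ needs the Darboux property of derivatives rather than continuity of $g'$, a point the paper glosses over as well.
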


The following statement is crucial to prove certain best analytic
inequalities, which is inspired by part (iv) of proof of Theorem 6 in \cite%
{Yang-arXiv-1304.5369} or Theorem 25 in \cite{Yang-AAA-2014-601839} and has
been proven in \cite[Lemma 3]{Yang-arXiv-1408.2250}. Here we give a simple
proof by utilizing Theorem \ref{T-Pinelis4-Ya}.

\begin{corollary}
\label{C-Y}Let $-\infty \leq a<b\leq \infty $. Suppose that (i) $f$ and $g$
are differentiable functions on $(a,b)$; (ii) $g^{\prime }\neq 0$ on $(a,b)$%
; (iii) there is a $c\in \left( a,b\right) $ such that $f^{\prime
}/g^{\prime }$ is increasing (decreasing) on $\left( a,c\right) $ and
decreasing (increasing) on $\left( c,b\right) $. Then

(i) when $f(a^{+})=g(a^{+})=0$ and $\lim_{x\rightarrow a^{+}}\left( f\left(
x\right) /g\left( x\right) \right) =\lambda \neq \pm \infty $, the
inequality $f\left( x\right) /g\left( x\right) >\left( <\right) \lambda $
holds for $x\in \left( a,b\right) $ if and only if $\lim_{x\rightarrow
b^{-}}\left( f\left( x\right) /g\left( x\right) \right) \geq \left( \leq
\right) \lambda $;

(ii) when $f(b^{-})=g(b^{-})=0$ and $\lim_{x\rightarrow b^{-}}\left( f\left(
x\right) /g\left( x\right) \right) =\lambda \neq \pm \infty $, the
inequality $f\left( x\right) /g\left( x\right) >\left( <\right) \lambda $
holds for $x\in \left( a,b\right) $ if and only if $\lim_{x\rightarrow
a^{+}}\left( f\left( x\right) /g\left( x\right) \right) \geq \left( \leq
\right) \lambda $.
\end{corollary}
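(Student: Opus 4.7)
The plan is to reduce the corollary directly to the \textbf{LPMR} established in Theorem \ref{T-Pinelis4-Ya} (for part (i)) and Theorem \ref{T-Pinelis4-Yb} (for part (ii)). I first dispose of the trivial ``only if'' direction of (i): if $f/g > \lambda$ on $(a,b)$, then passing to the limit $x\to b^-$ yields $\lim_{x\to b^-}f(x)/g(x) \geq \lambda$; the dual implication for $(<)/\leq$ is identical. Thus only the ``if'' direction is substantive.

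For the substantive direction, consider the $\nearrow\searrow$ pattern of $f'/g'$ (the $\searrow\nearrow$ case being symmetric). All hypotheses of Theorem \ref{T-Pinelis4-Ya} are in force, so $f/g$ on $(a,b)$ exhibits one of the two alternatives permitted by that theorem: either $f/g$ is strictly monotone on all of $(a,b)$, or there exists a unique interior $x_a \in (a,b)$ at which $f/g$ switches monotonicity exactly once (with shape $\nearrow\searrow$). Writing $\lambda := \lim_{x\to a^+} f/g$ and $\mu := \lim_{x\to b^-} f/g$, the assumption $\mu \geq \lambda$ immediately rules out the strictly decreasing alternative, since strict monotonicity (guaranteed by $g'\neq 0$) would force $\mu < \lambda$.

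In the remaining alternatives the conclusion is immediate from boundary values. If $f/g$ is strictly increasing on $(a,b)$, then $f(x)/g(x) > \lambda$ for every $x \in (a,b)$. If instead $f/g$ has the $\nearrow\searrow$ shape with interior maximum at $x_a$, then $f/g > \lambda$ on $(a,x_a]$ by the strict increase from $\lambda$, and on $(x_a,b)$ the function decreases strictly from that maximum toward $\mu \geq \lambda$, so $f/g > \mu \geq \lambda$ throughout $(x_a,b)$. Thus $f(x)/g(x) > \lambda$ for all $x \in (a,b)$. The symmetric argument with $f'/g'$ of pattern $\searrow\nearrow$ yields the $(<)/\leq$ half of part (i).

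Part (ii) is proved by the exact same strategy, invoking Theorem \ref{T-Pinelis4-Yb} in place of Theorem \ref{T-Pinelis4-Ya} with the roles of the two endpoints interchanged. The main technical point---rather than a genuine obstacle---is to ensure that the non-strict endpoint comparison $\mu \geq \lambda$ upgrades to the strict interior comparison $f/g > \lambda$; this is delivered by the strict monotonicity supplied by the conjunction of $g' \neq 0$ with the strict piecewise monotonicity of $f'/g'$, so that $\lambda$ is only approached as a one-sided limit at the endpoint and cannot be attained in the interior of $(a,b)$.
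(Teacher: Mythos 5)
Your proof is correct and follows essentially the same route as the paper: both reduce the statement to the LPMR of Theorem \ref{T-Pinelis4-Ya} (resp.\ Theorem \ref{T-Pinelis4-Yb}), splitting into the case where $f/g$ is monotone on $(a,b)$ and the case where it has a single interior extremum at $x_a$, and then comparing $f/g$ with its one-sided limits at the endpoints. Your extra step of ruling out a ``strictly decreasing'' alternative is harmless (that alternative is in fact already excluded by the theorem under the $\nearrow\searrow$ pattern with $f(a^{+})=g(a^{+})=0$), and your explicit treatment of the trivial ``only if'' direction matches what the paper leaves implicit.
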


\begin{proof}
We only prove part (i) of this assertion in the case when $f^{\prime
}/g^{\prime }$ is increasing on $\left( a,c\right) $ and decreasing on $%
\left( c,b\right) $, part (i) in another case and part (ii) can be proved in
similar way.

From Theorem \ref{T-Pinelis4-Ya} (see also Table 3), we distinguish two
cases:

Case 1: $g^{\prime }>0$ and $H_{f,g}\left( b^{-}\right) \geq 0$, or $%
g^{\prime }<0$ and $H_{f,g}\left( b^{-}\right) \leq 0$. In this case, by
Theorem \ref{T-Pinelis4-Ya} we see that $f/g$ is increasing on $\left(
a,b\right) $, so the assertion is clearly true.

Case 2: $g^{\prime }>0$ and $H_{f,g}\left( b^{-}\right) <0$, or $g^{\prime
}<0$ and $H_{f,g}\left( b^{-}\right) >0$. In this case, by Theorem \ref%
{T-Pinelis4-Ya} we see that there is a unique number $x_{a}\in \left(
a,b\right) $ such that $f/g$ is increasing (decreasing) on $\left(
a,x_{a}\right) $ and decreasing (increasing) on $\left( x_{a},b\right) $. So
it is obtained that%
\begin{eqnarray*}
\frac{f\left( x\right) }{g\left( x\right) } &>&\lim_{x\rightarrow a^{+}}%
\frac{f\left( x\right) }{g\left( x\right) }=\lambda \text{ for }x\in \left(
a,x_{a}\right) , \\
\frac{f\left( x\right) }{g\left( x\right) } &>&\lim_{x\rightarrow b^{-}}%
\frac{f\left( x\right) }{g\left( x\right) }\text{ for }x\in \left(
x_{a},b\right) .
\end{eqnarray*}%
From this it follows that $f\left( x\right) /g\left( x\right) >\lambda $
holds for $x\in \left( a,b\right) $ if and only if $\lim_{x\rightarrow
b^{-}}\left( f\left( x\right) /g\left( x\right) \right) \geq \lambda $.

This completes the proof.
\end{proof}

\section{Applications of LPMR}

\subsection{A reverse one of Lin's inequality in hyperbolic form.}

For $a,b>0$ with $a\neq b$, the logarithmic mean, identric (exponential)
mean and power mean are defined by%
\begin{eqnarray*}
L &\equiv &L\left( a,b\right) =\frac{a-b}{\ln a-\ln b}\text{, \ \ \ }I\equiv
I\left( a,b\right) =\frac{1}{e}\left( \frac{b^{b}}{a^{a}}\right) ^{1/\left(
b-a\right) }, \\
A_{p} &\equiv &A_{p}\left( a,b\right) =\left( \frac{a^{p}+b^{p}}{2}\right)
^{1/p}\text{ if }p\neq 0\text{ and }A_{0}=\sqrt{ab}=G.
\end{eqnarray*}

Lin's inequality \cite{Lin-AMM-81-1972} states that for positive numbers $%
a,b>0$ with $a\neq b$%
\begin{equation*}
L\left( a,b\right) <A_{1/3}
\end{equation*}%
(see also \cite{Yang-MIA-10(3)-2007}, \cite{Yang-JIA-2008-149286}, \cite%
{Yang-BKMS-49(1)-2012}). Let $x=\ln \sqrt{a/b}$. Then Lin's inequality can
be changed into%
\begin{equation}
\frac{\sinh x}{x}<\cosh ^{3}\frac{x}{3}  \label{L-h}
\end{equation}%
for $x>0$ (see also \cite{Neuman-AMC-218-2012}, \cite{Yang-JIA-2013-116}). A
trigonometric version of (\ref{L-h}) can be found in \cite%
{Klen-JIA-2010-362548}, \cite{Yang-JMI-7(4)-2013}, \cite{Yang-GJM-2(1)-2014}.

Now we establish the sharp double inequality related to (\ref{L-h}) as
follows.

\begin{proposition}
\label{P-Lh-Y}For $x>0$, the double inequality%
\begin{equation}
\left( \cosh \frac{x}{3}\right) ^{3p_{0}}<\frac{\sinh x}{x}<\left( \cosh 
\frac{x}{3}\right) ^{3}  \label{LY-h}
\end{equation}%
holds with the best exponents $3$ and $3p_{0}$, where%
\begin{equation*}
p_{0}=\frac{\ln \left( \frac{\sinh x_{0}}{x_{0}}\right) }{3\ln \left( \cosh 
\frac{x_{0}}{3}\right) }\approx 0.88071,
\end{equation*}%
here $x_{0}\approx 7.725532796173$ is the unique root of the equation%
\begin{equation}
3\left( x\coth x-1\right) \coth \tfrac{x}{3}\ln (\cosh \tfrac{x}{3})-x\ln 
\frac{\sinh x}{x}=0  \label{E1}
\end{equation}%
on $\left( 0,\infty \right) $.
\end{proposition}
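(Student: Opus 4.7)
The plan is to apply Theorem~\ref{T-Pinelis4-Ya} (LPMR) to the ratio $F(x) := f(x)/g(x)$ with
\[
f(x) = \ln\!\bigl(\sinh x / x\bigr), \qquad g(x) = 3\ln\cosh(x/3).
\]
Taking logs, the double inequality (\ref{LY-h}) is equivalent to $p_0 < F(x) < 1$ for all $x>0$. I would first check the endpoint data: a Taylor expansion at $0$ gives $f(x), g(x) \sim x^2/6$, so $f(0^+)=g(0^+)=0$ and $F(0^+)=1$; an asymptotic analysis at $\infty$ using $f(x) \sim x - \ln(2x)$ and $g(x)\sim x - 3\ln 2$ gives $F(\infty^-)=1$. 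Thus the two endpoint values of $F$ coincide, which is precisely the regime in which LPMR produces an interior extremum.

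Next I compute
\[
\phi(x) := \frac{f'(x)}{g'(x)} = \frac{\coth x - 1/x}{\tanh(x/3)},
\]
noting that $g'=\tanh(x/3)>0$ and $g>0$ on $(0,\infty)$, and that $\phi(0^+)=\phi(\infty^-)=1$. The crucial analytic step, and the one I expect to be the main obstacle, is to show that $\phi$ has the piecewise pattern $\searrow\nearrow$ on $(0,\infty)$. My approach will be to clear hyperbolic denominators using the triple-angle identities $\sinh x = \sinh(x/3)\bigl(3+4\sinh^2(x/3)\bigr)$ and $\cosh x = \cosh(x/3)\bigl(4\cosh^2(x/3)-3\bigr)$, reduce the sign of $\phi'$ to an inequality in $s=\sinh(x/3)$ and $c=\cosh(x/3)$, and then either apply Theorem~\ref{T-Yang1} again to a cleaner auxiliary ratio, or combine the small-$x$ series expansion $\phi(x)=1-4x^2/135+O(x^4)$ with the large-$x$ expansion $\phi(x)=1-1/x+O(e^{-2x/3})$ to localize the unique minimum.

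Once the $\searrow\nearrow$ pattern of $\phi=f'/g'$ is established, I apply Theorem~\ref{T-Pinelis4-Ya}. The hypotheses $g'>0$, $g>0$, $f(0^+)=g(0^+)=0$ are in force, and by Remark~\ref{Remark H(a+)=H(b-)=0} one has $H_{f,g}(0^+)=0$. Since Lin's inequality (\ref{L-h}) gives $F(x)<1$ on $(0,\infty)$ while $F(\infty^-)=1$, the function $F$ must be increasing near $\infty$, forcing $H_{f,g}(\infty^-)>0$. Reading off the row of Table~3 with $f'/g'\,\searrow\nearrow$, $g'>0$, $g>0$ and $H_{f,g}(b^-)>0$ yields that $F$ itself is $\searrow\nearrow$ on $(0,\infty)$.

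Finally, the $\searrow\nearrow$ pattern of $F$ together with the common endpoint value $1$ gives a unique interior minimum $p_0=F(x_0)$ and the strict inequality $p_0 < F(x) < 1$ for $x\neq x_0$. The critical-point condition $F'(x_0)=0$ reads $f'(x_0)g(x_0)=f(x_0)g'(x_0)$; substituting the explicit formulas and multiplying through by $x_0\coth(x_0/3)$ produces exactly equation (\ref{E1}), whose unique root in $(0,\infty)$ is $x_0$ by the monotonicity pattern of $F$. The numerical estimates $x_0\approx 7.7255$ and $p_0\approx 0.88071$ are then obtained by root-finding. Sharpness of the upper exponent $3$ follows from $F(0^+)=F(\infty^-)=1$, and sharpness of $3p_0$ from $F(x_0)=p_0$.
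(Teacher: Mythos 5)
Your setup coincides with the paper's: the same $f(x)=\ln(\sinh x/x)$, $g(x)=3\ln\cosh(x/3)$, the same endpoint data $f(0^{+})=g(0^{+})=0$, $F(0^{+})=F(\infty^{-})=1$, and the same appeal to Theorem~\ref{T-Pinelis4-Ya} once the pattern of $f'/g'$ and the sign of $H_{f,g}(\infty^{-})$ are known; your identification of the critical-point equation with (\ref{E1}) is also correct. The problem is that the one step you yourself flag as ``the main obstacle'' --- proving that $\phi=f'/g'$ is $\searrow\nearrow$ on $(0,\infty)$ --- is never carried out, and neither of your two fallback strategies closes it. Matching the expansion $\phi(x)=1-\tfrac{4}{135}x^{2}+O(x^{4})$ near $0$ with $\phi(x)=1-1/x+O(e^{-2x/3})$ near $\infty$ only shows that $\phi$ decreases near $0$ and increases near $\infty$; it cannot rule out several sign changes of $\phi'$ in between, which is exactly what the piecewise-monotone hypothesis (iv) of LPMR requires. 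The alternative ``apply Theorem~\ref{T-Yang1} to a cleaner auxiliary ratio'' is left entirely unspecified, so as written the proposal has a genuine gap at its load-bearing point.

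The paper fills this gap with a concrete computation: writing $(f'/g')'=h(x)/(3x^{2}\sinh^{2}\tfrac{x}{3}\sinh^{2}x)$, converting products of hyperbolic functions to sums, and expanding $\tfrac{8}{3}h(3x/2)=\sum_{n\ge 2}\tfrac{u_{n}}{(2n-1)!}x^{2n-1}$ with $u_{n}=4^{2n-1}-2^{2n-1}-2(2n-1)(n-2)3^{2n-2}-2(3n-2)(6n-5)$; one checks $u_{2}=u_{3}=0$, $u_{n}<0$ for $4\le n\le 7$, and $u_{n}>0$ for $n\ge 8$ via the recursion $u_{n+1}-16u_{n}>0$ together with $u_{8}>0$, and then Lemma~\ref{Lemma zp} yields the unique sign change of $h$, hence the $\searrow\nearrow$ pattern of $f'/g'$. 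Some such global coefficient (or equivalent) argument is what your proof still needs. Two smaller remarks: the paper obtains $H_{f,g}(\infty^{-})=+\infty$ by direct computation, whereas you infer $H_{f,g}(\infty^{-})>0$ indirectly from Lin's inequality plus the LPMR dichotomy --- this is logically admissible since (\ref{L-h}) is a known cited result, but the direct limit is just as easy; and note that the lower bound in (\ref{LY-h}) is attained at $x_{0}$ (i.e.\ $F(x_{0})=p_{0}$), so the inequality with exponent $3p_{0}$ holds with equality there, a point your phrasing handles correctly but the strict form of the displayed statement glosses over.
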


To prove this proposition, we need the following lemma. Its proof is similar
to Lemma 5 in \cite{Yang-AAA-2014-702718}, here we omit details of proof.

\begin{lemma}[{\protect\cite[Lemma 5]{Yang-AAA-2014-702718}}]
\label{Lemma zp}Let $P\left( x\right) $ be a power series which is
convergent on $\left( 0,\infty \right) $ defined by%
\begin{equation*}
P\left( x\right) =\sum_{i=m+1}^{\infty }a_{i}x^{i}-\sum_{i=0}^{m}a_{i}x^{i},
\end{equation*}%
where $a_{i}\geq 0$ for $i\geq m+1$ with $\max_{i\geq m+1}\left(
a_{i}\right) >0$ and $a_{m}>0$, $a_{i}\geq 0$ for $0\leq i\leq m-1$. Then
there is a unique number $x_{0}\in \left( 0,\infty \right) $ to satisfy $%
P\left( x\right) =0$ such that $P\left( x\right) <0$ for $x\in \left(
0,x_{0}\right) $ and $P\left( x\right) >0$ for $x\in \left( x_{0},\infty
\right) $.
\end{lemma}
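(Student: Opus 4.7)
The plan is to reduce the sign analysis of $P(x)$ to that of a strictly monotone function by dividing through by $x^{m}$. Specifically, for $x>0$ write
\[
\frac{P(x)}{x^{m}} \;=\; F(x)-G(x), \qquad F(x):=\sum_{j=1}^{\infty} a_{j+m}\,x^{j}, \qquad G(x):=\sum_{i=0}^{m} a_i\,x^{\,i-m}.
\]
The virtue of this splitting is that the ``positive-index'' coefficients of $P$ end up only in $F$ and the ``negative-index'' ones only in $G$, so each side has a clear monotonicity pattern.

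First I would check that $F$ is strictly increasing on $(0,\infty)$: since at least one $a_{j+m}$ is positive and all are nonnegative, term-by-term differentiation of the power series (justified by its assumed convergence on $(0,\infty)$) gives $F'(x)>0$ for every $x>0$, with $F(0^{+})=0$ and $F(+\infty)=+\infty$. Next I would check that $G$ is non-increasing on $(0,\infty)$: it is the finite sum $a_m+\sum_{i=0}^{m-1}a_i\,x^{\,i-m}$ of nonnegative multiples of nonpositive powers of $x$, so $G'(x)\le 0$ everywhere. There are two harmless sub-cases for the limits: if $a_0=\cdots=a_{m-1}=0$ then $G\equiv a_m$, so $G(0^{+})=G(+\infty)=a_m$; otherwise some $a_i/x^{m-i}$ term blows up at $0^{+}$, giving $G(0^{+})=+\infty$ and $G(+\infty)=a_m$. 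In every sub-case, $G(+\infty)=a_m>0$ and $G(0^{+})\ge a_m$.

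Combining these, $F-G$ is strictly increasing on $(0,\infty)$ with
\[
\lim_{x\to 0^{+}}\bigl(F(x)-G(x)\bigr)\le -a_m<0 \quad\text{and}\quad \lim_{x\to +\infty}\bigl(F(x)-G(x)\bigr)=+\infty.
\]
By continuity and the intermediate value theorem, $F-G$ has a unique zero $x_0\in(0,\infty)$, and $F-G<0$ on $(0,x_0)$ while $F-G>0$ on $(x_0,\infty)$. Multiplying by $x^{m}>0$ preserves signs, so $P$ inherits exactly the same sign pattern, which is the conclusion.

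There is no serious obstacle: the whole proof is a monotonicity-plus-IVT argument, and the only point that demands a small bit of care is handling the two limit sub-cases for $G(0^{+})$ uniformly, together with verifying that the convergence hypothesis on $(0,\infty)$ permits term-by-term differentiation of $F$. Everything else is elementary, which is consistent with the author's remark that the proof parallels the one in the cited paper and can be omitted.
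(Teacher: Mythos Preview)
Your argument is correct: dividing by $x^{m}$ to obtain a strictly increasing difference $F-G$ and then invoking the intermediate value theorem is a clean and complete proof, and the limit checks at $0^{+}$ and $+\infty$ are handled properly in both sub-cases. The paper itself omits the proof entirely, merely citing \cite{Yang-AAA-2014-702718} and remarking that the argument is similar; your write-up is fully consistent with that remark and would serve as a valid reconstruction.
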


\begin{proof}[Proof of Proposition \protect\ref{P-Lh-Y}]
For $x\in \left( 0,\infty \right) $, we define 
\begin{equation*}
f\left( x\right) =\ln \frac{\sinh x}{x}\text{ \ and \ }g\left( x\right)
=3\ln \cosh \frac{x}{3}.
\end{equation*}%
It is easy to check that $g^{\prime }\left( x\right) =\tanh \left(
x/3\right) >0$ on $\left( 0,\infty \right) $ and clearly, $f\left(
0^{+}\right) =g\left( 0^{+}\right) =0$. Now we show that there is a $%
x_{1}\in \left( 0,\infty \right) $ such that $f^{\prime }/g^{\prime }$ is
decreasing on $\left( 0,x_{1}\right) $ and increasing on $\left(
x_{1},\infty \right) $.

Differentiation gives%
\begin{eqnarray*}
\frac{f^{\prime }\left( x\right) }{g^{\prime }\left( x\right) } &=&\frac{%
\frac{\cosh x}{\sinh x}-\frac{1}{x}}{\frac{\sinh \frac{x}{3}}{\cosh \frac{x}{%
3}}}=\coth \frac{x}{3}\coth x-\frac{1}{x}\coth \frac{x}{3}, \\
\left( \frac{f^{\prime }\left( x\right) }{g^{\prime }\left( x\right) }%
\right) ^{\prime } &=&-\frac{\cosh \frac{x}{3}}{\sinh \frac{x}{3}\sinh ^{2}x}%
-\frac{1}{3}\frac{\cosh x}{\sinh ^{2}\frac{x}{3}\sinh x}+\frac{1}{3}\frac{1}{%
x\sinh ^{2}\frac{x}{3}}+\frac{1}{x^{2}}\frac{\cosh \frac{x}{3}}{\sinh \frac{x%
}{3}} \\
&=&\frac{\left( -3\cosh \frac{x}{3}\sinh \frac{x}{3}-\cosh x\sinh x\right)
x^{2}+x\sinh ^{2}x+3\cosh \frac{x}{3}\sinh \frac{x}{3}\sinh ^{2}x}{%
3x^{2}\sinh ^{2}\frac{x}{3}\sinh ^{2}x} \\
&:&=\frac{h\left( x\right) }{3x^{2}\sinh ^{2}\frac{x}{3}\sinh ^{2}x}.
\end{eqnarray*}%
Using "product into sum" formulas for hyperbolic functions and expanding in
power series, we have%
\begin{eqnarray*}
\frac{8}{3}h\left( \frac{3x}{2}\right) &=&\sinh 4x-3x^{2}\sinh 3x+2x\cosh
3x-9x^{2}\sinh x-\sinh 2x-2\sinh x-2x \\
&=&\sum_{n=1}^{\infty }\frac{4^{2n-1}}{\left( 2n-1\right) !}%
x^{2n-1}-3\sum_{n=2}^{\infty }\frac{3^{2n-3}}{\left( 2n-3\right) !}%
x^{2n-1}+2\sum_{n=1}^{\infty }\frac{3^{2n-2}}{\left( 2n-2\right) !}x^{2n-1}
\\
&&-\sum_{n=1}^{\infty }\frac{2^{2n-1}}{\left( 2n-1\right) !}%
x^{2n-1}-9\sum_{n=2}^{\infty }\frac{1}{\left( 2n-3\right) !}%
x^{2n-1}-2\sum_{n=1}^{\infty }\frac{1}{\left( 2n-1\right) !}x^{2n-1}-2x \\
&:&=\sum_{n=2}^{\infty }\frac{u_{n}}{\left( 2n-1\right) !}x^{2n-1},
\end{eqnarray*}

where%
\begin{equation*}
u_{n}=4^{2n-1}-2^{2n-1}-2\left( 2n-1\right) \left( n-2\right)
3^{2n-2}-2\left( 3n-2\right) \left( 6n-5\right) .
\end{equation*}

It is easy to check that $u_{2}=u_{3}=0$, $u_{n}<0$ for $n=4,5,6,7$. While
the recursive relation%
\begin{equation*}
u_{n+1}-16u_{n}=2\left( 14n^{2}-71n+41\right) 3^{2n-2}+6\times
2^{2n}+6\left( 90n^{2}-147n+53\right) >0
\end{equation*}%
for $n\geq 8$ together with $u_{8}=212\,772\,744>0$ leads to $u_{n}>0$ for $%
n\geq 8$.

By Lemma \ref{Lemma zp} we see that there is a unique $x_{2}\in \left(
0,\infty \right) $ such that $h\left( 3x/2\right) <0$ for $x\in \left(
0,x_{2}\right) $ and $h\left( 3x/2\right) >0$ for $x\in \left( x_{2},\infty
\right) $. This implies that $f^{\prime }/g^{\prime }$ is decreasing on $%
\left( 0,x_{1}\right) $ and increasing on $\left( x_{1},\infty \right) $,
where $x_{1}=3x_{2}/2$.

On the other hand, we see that%
\begin{eqnarray*}
H_{f,g}\left( x\right) &=&\frac{f^{\prime }\left( x\right) }{g^{\prime
}\left( x\right) }g\left( x\right) -f\left( x\right) \\
&=&\left( \coth \frac{x}{3}\coth x-\frac{1}{x}\coth \frac{x}{3}\right) 3\ln
\cosh x-\ln \frac{\sinh x}{x}.
\end{eqnarray*}%
Direct computations lead us to $H_{f,g}\left( 0^{+}\right) =0$ and%
\begin{eqnarray*}
\lim_{x\rightarrow \infty }H_{f,g}\left( x\right) &=&\lim_{x\rightarrow
\infty }\left( 3\left( \coth \frac{x}{3}\coth x-\frac{1}{x}\coth \frac{x}{3}%
\right) \ln \cosh \frac{x}{3}-\ln \frac{\sinh x}{x}\right) \\
&=&\infty .
\end{eqnarray*}

Application of part (ii) of Theorem \ref{T-Pinelis4-Ya} reveals that there
is a unique number $x_{0}\in \left( 0,\infty \right) $ such that $f/g$ is
decreasing on $\left( 0,x_{0}\right) $ and increasing on $\left(
x_{0},\infty \right) $. Therefore, we conclude that%
\begin{eqnarray*}
\frac{f\left( x_{0}\right) }{g\left( x_{0}\right) } &\leq &\frac{f\left(
x\right) }{g\left( x\right) }<\lim_{x\rightarrow 0^{+}}\frac{f\left(
x\right) }{g\left( x\right) }=1\text{ for }x\in (0,x_{0}], \\
\frac{f\left( x_{0}\right) }{g\left( x_{0}\right) } &<&\frac{f\left(
x\right) }{g\left( x\right) }<\lim_{x\rightarrow \infty }\frac{f\left(
x\right) }{g\left( x\right) }=1\text{ for }x\in \left( x_{0},\infty \right) ,
\end{eqnarray*}%
that is,%
\begin{equation*}
p_{0}=\frac{\ln \frac{\sinh x_{0}}{x_{0}}}{3\ln \cosh \frac{x_{0}}{3}}\leq 
\frac{\ln \frac{\sinh x}{x}}{3\ln \cosh \frac{x}{3}}\leq 1,
\end{equation*}%
which proves the desired companion inequality (\ref{LY-h}).

Solving the equation%
\begin{equation*}
H_{f,g}\left( x\right) =\left( \coth \frac{x}{3}\coth x-\frac{1}{x}\coth 
\frac{x}{3}\right) 3\ln \cosh x-\ln \frac{\sinh x}{x}=0,
\end{equation*}%
which is equivalent to (\ref{E1}), we find that $x_{0}\approx 7.725532796173$%
, and then%
\begin{equation*}
p_{0}=\frac{\ln \frac{\sinh x_{0}}{x_{0}}}{3\ln \cosh \frac{x_{0}}{3}}%
\approx 0.88071.
\end{equation*}

This completes the proof.
\end{proof}

\begin{remark}
For $a,b>0$ with $a\neq b$, let $x=\ln \sqrt{a/b}$ in (\ref{LY-h}). Then (%
\ref{LY-h}) can be written as%
\begin{equation*}
A_{1/3}^{p_{0}}G^{1-p_{0}}<L<A_{1/3},
\end{equation*}%
where $p_{0}$ is the best possible.
\end{remark}

\begin{remark}
Similarly, for $a,b>0$ with $a\neq b$, Stolarsky's inequality \cite%
{Stolarsky-AMM-87-1980}%
\begin{equation}
I\left( a,b\right) =\frac{1}{e}\left( \frac{b^{b}}{a^{a}}\right) ^{1/\left(
b-a\right) }>\left( \frac{a^{2/3}+b^{2/3}}{2}\right) ^{3/2}=A_{2/3}
\label{I-Ap}
\end{equation}%
can be changed into%
\begin{equation}
e^{\frac{x\cosh x}{\sinh x}-1}>\left( \cosh \frac{2x}{3}\right) ^{3/2}
\label{S-h}
\end{equation}%
for $x>0$. In the same method, we can establish a sharp double inequality
related to (\ref{S-h}), further proof details of which is left to readers.
\end{remark}

\begin{proposition}
For $x>0$, the double inequality%
\begin{equation*}
\left( \cosh \frac{2x}{3}\right) ^{3/2}<e^{\frac{x\cosh x}{\sinh x}%
-1}<\left( \cosh \frac{2x}{3}\right) ^{3p_{1}/2}
\end{equation*}%
holds with the best constants $1$ and 
\begin{equation*}
p_{1}=\frac{\frac{x_{0}\cosh x_{0}}{\sinh x_{0}}-1}{\frac{3}{2}\ln \left(
\cosh \frac{2x_{0}}{3}\right) }\approx 1.0140,
\end{equation*}%
where $x_{0}\approx 2.672067684303$ is the unique root of the equation%
\begin{equation*}
\tfrac{3}{2}\left( \cosh x\sinh x-x\right) \coth \frac{2x}{3}\ln \left(
\cosh \tfrac{2x}{3}\right) -\left( x\cosh x-\sinh x\right) \sinh x=0
\end{equation*}%
on $\left( 0,\infty \right) $.
\end{proposition}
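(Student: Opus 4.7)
The plan is to apply Theorem \ref{T-Pinelis4-Ya} to the pair
\[
f(x) := x\coth x - 1, \qquad g(x) := \frac{3}{2}\ln\cosh\frac{2x}{3}
\]
on $(a,b)=(0,\infty)$, mirroring exactly the proof of Proposition \ref{P-Lh-Y}. First I would record the basic data: the Maclaurin expansions $f(x)=x^{2}/3+O(x^{4})$ and $g(x)=x^{2}/3+O(x^{4})$ give $f(0^{+})=g(0^{+})=0$ and $\lim_{x\to 0^{+}}f(x)/g(x)=1$, while by Remark \ref{Remark H(a+)=H(b-)=0} we automatically have $H_{f,g}(0^{+})=0$. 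Moreover $g'(x)=\tanh(2x/3)>0$ on $(0,\infty)$, and the asymptotics $f(x)=x-1+o(1)$, $g(x)=x-\tfrac{3}{2}\ln 2+o(1)$ give $\lim_{x\to\infty}f(x)/g(x)=1$.

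The heart of the argument is to establish that $\rho := f'/g'$ has the piecewise monotonicity pattern $\nearrow\searrow$ on $(0,\infty)$. A Taylor expansion near $0$ gives $\rho(x)=1+\tfrac{2}{135}x^{2}+O(x^{4})$, and an expansion at infinity using $\coth x\sim 1$ and $\tanh(2x/3)\sim 1$ yields $\rho(x)=1+2e^{-4x/3}+O(e^{-2x})$, so $\rho$ approaches $1$ from above at both ends. To pin down a single turning point, I would differentiate $\rho$, clear the positive denominators $\sinh^{2}x\,\sinh^{2}(2x/3)$, and reduce the sign of $\rho'$ to that of a single auxiliary function $h(x)$. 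After the substitution $x\mapsto 3x/2$ (mirroring the move used in the proof of Proposition \ref{P-Lh-Y}), the resulting combination of $\sinh kx$ and $\cosh kx$ for $k\in\{1,2,3,4\}$ expands as a Maclaurin series $h(3x/2)=\sum_{n\ge n_{0}}\frac{v_{n}}{(2n-1)!}x^{2n-1}$ whose coefficients $v_{n}$ are explicit integer combinations of powers of $4,3,2$ with small polynomials in $n$. The expected pattern is that the first few $v_{n}$ are positive and the remaining ones negative, so Lemma \ref{Lemma zp} applied to $-h(3x/2)$ produces a unique zero of $\rho'$ where $\rho$ turns from increasing to decreasing.

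Given this, the only remaining endpoint value is
\[
H_{f,g}(\infty) = \lim_{x\to\infty}\bigl(\rho(x)g(x)-f(x)\bigr) = \bigl(x-\tfrac{3}{2}\ln 2\bigr)-(x-1)+o(1) = 1-\tfrac{3}{2}\ln 2 < 0,
\]
where the $o(1)$ correction is controlled by $2xe^{-4x/3}\to 0$. With $\rho:\nearrow\searrow$, $g'>0$, $g>0$, $H_{f,g}(0^{+})=0$, and $H_{f,g}(\infty)<0$, the second row of Table~3 in Theorem \ref{T-Pinelis4-Ya}(ii) delivers a unique $x_{0}\in(0,\infty)$ such that $f/g$ is increasing on $(0,x_{0})$ and decreasing on $(x_{0},\infty)$. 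Combined with $\lim_{x\to 0^{+}}f/g=\lim_{x\to\infty}f/g=1$, this forces
\[
1 < \frac{f(x)}{g(x)} \le p_{1} := \frac{f(x_{0})}{g(x_{0})} \qquad (x\in(0,\infty)),
\]
which after exponentiation is exactly the stated double inequality, and shows that both exponents are best possible. Finally, the critical equation $H_{f,g}(x_{0})=0$, after multiplication by $\sinh^{2}x_{0}$, rearranges into the displayed transcendental identity for $x_{0}$; numerical root-finding then yields $x_{0}\approx 2.672067684303$ and $p_{1}\approx 1.0140$.

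The main obstacle will be the Maclaurin/recursion analysis of $h$. Since the two frequencies $1$ and $2/3$ in $\sinh x$ and $\sinh(2x/3)$ are rescaled to $\{4,3,2,1\}$ by $x\mapsto 3x/2$, the coefficients $v_{n}$ are somewhat more involved than the coefficients $u_{n}$ appearing in Proposition \ref{P-Lh-Y}, and I expect the exceptional low-order terms together with the comparison recursion (an analogue of the estimate $u_{n+1}-16u_{n}>0$) to require careful bookkeeping before Lemma \ref{Lemma zp} can be invoked.
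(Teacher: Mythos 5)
Your proposal follows exactly the route the paper intends: the paper gives no proof of this proposition, stating only that it follows ``in the same method'' as Proposition \ref{P-Lh-Y} and leaving the details to the reader, and your setup is that method carried out correctly --- the choice $f(x)=x\coth x-1$, $g(x)=\tfrac{3}{2}\ln\cosh\tfrac{2x}{3}$, the limits $f/g\to 1$ at both endpoints, $H_{f,g}(0^{+})=0$, $H_{f,g}(\infty)=1-\tfrac{3}{2}\ln 2<0$, the appeal to the second row of Table 3 via Theorem \ref{T-Pinelis4-Ya}(ii), and the reduction of $H_{f,g}(x_{0})=0$ (after multiplying by $\sinh^{2}x_{0}$) to the stated transcendental equation all check out, as do the numerical values $x_{0}\approx 2.672$, $p_{1}\approx 1.0140$. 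The one step you leave unexecuted --- the product-to-sum/Maclaurin bookkeeping showing your auxiliary function $h$ has a single coefficient sign change so that Lemma \ref{Lemma zp} applies and $f^{\prime}/g^{\prime}$ is indeed $\nearrow\searrow$ --- is precisely the computation the paper also omits, so it is the only detail still to be filled in.
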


\subsection{New sharp inequalities for identric (exponential) mean.}

There has many inequalities between identric (exponential) mean and power
mean, here we only quote those sharp ones. In \cite%
{Pittinger-UBPEFSMF-680-1980}, Pittinger proved the inequality%
\begin{equation*}
I\left( a,b\right) <A_{\ln 2}\left( a,b\right)
\end{equation*}%
holds, where $\ln 2$ is the best. Alzer \cite{Alzer-EM-43-1988} and Neuman
and S\'{a}ndor \cite{Neuman-IJMMS-2003-16} showed the double inequality%
\begin{equation*}
\frac{2}{e}A_{1}\left( a,b\right) <I\left( a,b\right) <\frac{4}{e}%
A_{1/2}\left( a,b\right)
\end{equation*}%
holds with best coefficients $2/e$ and $4/e$ (see also \cite%
{Yang-JIPAM-6(4)-2005}). The following sharp inequality%
\begin{equation*}
I\left( a,b\right) <\frac{2\sqrt{2}}{e}A_{2/3}\left( a,b\right)
\end{equation*}%
is due to Yang \cite{Yang-MIA-10(3)-2007} (see also \cite%
{Neuman-AADM-3(1)-2009}).

Now we present a more general results involving identric (exponential) mean
and power mean. Without loss of generality, we assume that $b>a>0$, and set $%
x=a/b$. Then $x\in \left( 0,1\right) $. We have

\begin{proposition}
\label{P-IP-Y}Let $p\in \mathbb{R}$ and $x\in \left( 0,1\right) $.

(i) For $p\in (-\infty ,2/3]$, the inequalities%
\begin{equation}
A_{p}\left( x,1\right) <I\left( x,1\right) <A_{p}\left( x,1\right) ^{p/\ln 2}
\label{I-new1}
\end{equation}%
hold for $x\in \left( 0,1\right) $ with the best exponents $1$ and $p/\ln 2$%
. For $p\in \lbrack 1,\infty )$, (\ref{I-new1}) are reversed.

(ii) $p\in (2/3,1)$, the double inequality%
\begin{equation}
A_{p}\left( x,1\right) ^{\delta _{0}\left( p\right) }\leq I\left( x,1\right)
<A_{p}\left( x,1\right) ^{\delta _{1}\left( p\right) }  \label{I-new2}
\end{equation}%
holds for $x\in \left( 0,1\right) $, where%
\begin{equation*}
\delta _{0}\left( p\right) =\frac{\frac{x_{0}\ln x_{0}}{x_{0}-1}-1}{\frac{1}{%
p}\ln \frac{x_{0}^{p}+1}{2}}\text{ \ and \ }\delta _{1}\left( p\right)
=\min_{p\in \left( 2/3,1\right) }\left( \frac{p}{\ln 2},1\right)
\end{equation*}%
are the best exponents, here $x_{0}$ is the unique root of the equation%
\begin{equation}
H_{f,g}\left( x\right) =\frac{\left( x+x^{1-p}\right) \left( x-1-\ln
x\right) }{\left( x-1\right) ^{2}}\frac{1}{p}\ln \frac{x^{p}+1}{2}-\left( 
\frac{x\ln x}{x-1}-1\right) =0  \label{H=0-I}
\end{equation}%
on $\left( 0,1\right) $. In particular, let $p=\ln 2$. Then we have%
\begin{equation}
A_{\ln 2}\left( x,1\right) ^{\delta _{0}\left( \ln 2\right) }<I\left(
x,1\right) <A_{\ln 2}\left( x,1\right)  \label{I-new3}
\end{equation}%
holds for $x\in \left( 0,1\right) $, where $\delta _{0}\left( \ln 2\right)
\approx 1.0154$.
\end{proposition}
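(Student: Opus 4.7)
Plan: The natural setup is to write the inequalities in logarithmic form and study a single quotient. Put
\[
f(x)=\ln I(x,1)=\frac{x\ln x}{x-1}-1,\qquad g(x)=\ln A_p(x,1)=\frac{1}{p}\ln\frac{x^p+1}{2}
\]
on $(0,1)$; both functions are differentiable, $g<0$ and $g'>0$ on the whole interval, and $f(1^-)=g(1^-)=0$. Since dividing an inequality of the form $\ln I\gtrless\alpha\ln A_p$ by $g<0$ flips it, the proposition reduces to sharp two-sided bounds on $r(x):=f(x)/g(x)$. The endpoint values are $r(1^-)=1$ (by L'Hospital on $0/0$) and $r(0^+)=p/\ln 2$ when $p>0$ (finite $g(0^+)=-\ln 2/p$), or $r(0^+)=0$ when $p\leq 0$ (where $g(0^+)=-\infty$). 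This is exactly the framework of Theorem \ref{T-Pinelis4-Yb}, so the whole proof hinges on determining the monotonicity of $f'/g'$ on $(0,1)$.

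A direct computation yields
\[
\frac{f'(x)}{g'(x)}=\frac{(x-1-\ln x)(x+x^{1-p})}{(x-1)^2},
\]
with $(f'/g')(1^-)=1$, while $(f'/g')(0^+)=0$ for $p<1$ and $+\infty$ for $p\geq 1$. The main obstacle of the proof is pinning down the monotonicity pattern of $f'/g'$ on $(0,1)$ as $p$ varies. The expected picture is \textbf{(a)} for $p\leq 2/3$, $f'/g'$ is strictly increasing on $(0,1)$; \textbf{(b)} for $p\in(2/3,1)$, $f'/g'$ follows the pattern $\nearrow\searrow$; \textbf{(c)} for $p\geq 1$, $f'/g'$ is strictly decreasing on $(0,1)$. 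The critical value $p=2/3$ should emerge from a Taylor expansion of $(f'/g')'$ at $x=1$: its leading nontrivial coefficient, viewed as a polynomial in $p$, changes sign exactly at $p=2/3$. Globalizing this local fact to the whole interval is the real work; the plan is to substitute $x=e^{-t}$, clear denominators, and apply Lemma \ref{Lemma zp} to the resulting single-variable power series whose coefficient pattern depends explicitly on $p$.

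Once the monotonicity of $f'/g'$ is in hand, the rest is short. For part (i) with $p\leq 2/3$ (resp.\ $p\geq 1$), LMR (Theorem \ref{T-Pinelis1}) makes $r$ strictly increasing (resp.\ decreasing) on $(0,1)$; combined with the endpoint values of $r$, this gives $p/\ln 2<r(x)<1$ (resp.\ its reversal), which rewritten multiplicatively is exactly (\ref{I-new1}); sharpness of both exponents is then supplied by Corollary \ref{C-Y}. For part (ii) with $p\in(2/3,1)$, a direct calculation gives $H_{f,g}(0^+)=1>0$: indeed $(f'/g')(0^+)=0$, $g(0^+)=-\ln 2/p$ is finite, and $f(0^+)=-1$, so $H_{f,g}(0^+)=0\cdot(-\ln 2/p)-(-1)=1$. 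Combined with the $\nearrow\searrow$ pattern of $f'/g'$, Theorem \ref{T-Pinelis4-Yb}(ii) produces a unique $x_0\in(0,1)$ at which $r$ attains its maximum, characterized by $H_{f,g}(x_0)=0$ — which is precisely equation (\ref{H=0-I}). The resulting sharp bounds are $\delta_0(p)=r(x_0)$ on top and $\delta_1(p)=\min(p/\ln 2,\,1)$ (the smaller of the two endpoint limits) on the bottom, with sharpness again coming from Corollary \ref{C-Y}. The specialization (\ref{I-new3}) is the case $p=\ln 2$, where the two endpoint limits of $r$ coincide at $1$.
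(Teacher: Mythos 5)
Your overall skeleton coincides with the paper's: the same $f=\ln I$, $g=\ln A_p$, the same endpoint data ($f(1^-)=g(1^-)=0$, $r(1^-)=1$, $r(0^+)=p/\ln 2$ for $p>0$, $H_{f,g}(0^+)=1$), the same appeal to Theorem \ref{T-Pinelis4-Yb}(ii) to get a unique interior maximum point $x_0$ of $r=f/g$ characterized by $H_{f,g}(x_0)=0$, i.e.\ (\ref{H=0-I}), and the same read-off of $\delta_0(p)=r(x_0)$, $\delta_1(p)=\min(p/\ln 2,1)$. But the heart of the proof is missing: you only announce the ``expected picture'' for the monotonicity of $f'/g'$ (increasing for $p\le 2/3$, $\nearrow\searrow$ for $p\in(2/3,1)$, decreasing for $p\ge 1$) and defer it to a plan (``substitute $x=e^{-t}$, clear denominators, apply Lemma \ref{Lemma zp}'') that is never executed and is not obviously workable: after that substitution the expansion involves terms like $e^{-(1-p)t}$ and $e^{-pt}$, so the coefficients depend on the parameter $p$ and the fixed sign pattern required by Lemma \ref{Lemma zp} is not available in any evident way, nor is it shown how the threshold $2/3$ would emerge globally rather than just from a Taylor coefficient at $x=1$. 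The paper instead iterates the quotient machinery: it writes $f'/g'=f_1/g_1$ with $f_1=x-1-\ln x$, $g_1=(x-1)^2/(x+x^{1-p})$, computes $f_1'/g_1'$, reduces the sign of $(f_1'/g_1')'$ to the sign of the explicit function $w$ of Lemma \ref{L-sgnw} (proved by showing $w''>0$ via the AM--GM estimate $x^p<1-p+px$), and then --- crucially for $p\in(2/3,1)$ --- applies Theorem \ref{T-Pinelis4-Yb} a \emph{second} time, to the pair $(f_1,g_1)$ (using $f_1(1^-)=g_1(1^-)=0$, $g_1'<0$, $H_{f_1,g_1}(0^+)=-\infty$), to transfer the $\nearrow\searrow$ pattern from $f_1'/g_1'$ to $f'/g'=f_1/g_1$. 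Your proposal overlooks that this transfer is itself a nontrivial step: a single sign change of $(f_1'/g_1')'$ does not by itself give a single sign change of $(f'/g')'$.

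There is also a secondary gap in part (i). Your own endpoint computation gives $r(0^+)=0$ for $p\le 0$, which is inconsistent with concluding ``$p/\ln 2<r(x)<1$ from the endpoint values'' for all $p\le 2/3$, and Corollary \ref{C-Y} cannot then certify sharpness of $p/\ln 2$ in that range; likewise nothing in your sketch covers $p>1$. The paper proves the quotient monotonicity only for $p\in(0,2/3]$ and for $p=1$ (where Lemma \ref{L-sgnw} applies), and then extends to $p\le 0$ and reduces $p\ge 1$ to $p=1$ by the monotonicity in $p$ of $p\mapsto A_p(x,1)$ and of $p\mapsto A_p(x,1)^{p/\ln 2}=\left(\left(x^p+1\right)/2\right)^{1/\ln 2}$. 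Some such reduction (or a separate analysis of $f'/g'$ for those $p$) must be supplied for your argument to cover the full stated ranges.
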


In order to prove the Proposition \ref{P-IP-Y}, the following statement is
necessary.

\begin{lemma}
\label{L-sgnw}Let the function $w$ be defined on $\left( 0,1\right) $ by%
\begin{equation}
w\left( x\right) =x^{p}+p\left( p-1\right) x-\left( p^{2}+2p-2\right)
-p\left( p+1\right) x^{1-p}+\left( p-1\right) ^{2}x^{-p}.  \label{w}
\end{equation}%
Then (i) $w\left( x\right) >0$ if $p\in (0,2/3]$;

(ii) if $p\in (2/3,1)$, then there is a unique number $x_{1}\in \left(
0,1\right) $ such that $w\left( x\right) >0$ for $x\in \left( 0,x_{1}\right) 
$ and $w\left( x\right) <0$ for $x\in \left( x_{1},1\right) $;

(iii) $w\left( x\right) <0$ if $p=1$.
\end{lemma}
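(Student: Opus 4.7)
The plan is to reduce the sign analysis of $w$ to a monotonicity analysis driven by its second derivative. First, I would record the boundary data: since $p\in (0,1)$ makes $(p-1)^2 x^{-p}\to+\infty$ as $x\to 0^+$, one has $w(0^+)=+\infty$; direct substitution gives $w(1)=4-6p$ and $w'(1)=p(3p-2)$. Both vanish exactly at $p=2/3$, which explains the trichotomy in the statement. Part (iii) is immediate because for $p=1$ the formula collapses to $w(x)=x-3<0$ on $(0,1)$.

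The crucial technical step is to prove $w''(x)>0$ on $(0,1)$ for every $p\in(0,1)$. A direct computation yields
\[
w''(x)=\frac{p(p-1)}{x^{p+2}}\,B(x),\qquad B(x):=x^{2p}-p(p+1)x+(p^2-1),
\]
so, because $p(p-1)<0$ and $x^{p+2}>0$, it suffices to show $B<0$ on $(0,1)$. Note $B(0^+)=p^2-1<0$ and $B(1)=-p<0$. When $p\ge 1/2$, $B''(x)=2p(2p-1)x^{2p-2}\ge 0$, so $B$ is convex on $[0,1]$ and hence bounded above by the maximum of its endpoint values, which is negative. When $p<1/2$, $B'(x)=2px^{2p-1}-p(p+1)$ is decreasing in $x$ (since $x^{2p-1}$ is decreasing), and $B'(1)=p(1-p)>0$, so $B'>0$ on $(0,1)$; hence $B$ is strictly increasing and stays below $B(1)<0$. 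In either case $B<0$, hence $w''>0$ throughout $(0,1)$.

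With $w''>0$, $w'$ is strictly increasing from $w'(0^+)=-\infty$ (forced by the $-p(p-1)^2 x^{-p-1}$ term) to $w'(1)=p(3p-2)$. For part (i) with $p\in(0,2/3]$, $w'(1)\le 0$ forces $w'<0$ on $(0,1)$, so $w$ decreases strictly from $+\infty$ to $w(1)=4-6p\ge 0$, giving $w>0$ on $(0,1)$. For part (ii) with $p\in(2/3,1)$, $w'(1)>0$ yields a unique interior zero $x_2$ of $w'$, so $w$ is decreasing on $(0,x_2)$ and increasing on $(x_2,1)$, with $w(0^+)=+\infty$ and $w(1)=4-6p<0$. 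Since $w(x_2)\le w(1)<0$, the intermediate value theorem on $(0,x_2)$ produces a unique $x_1$ with $w(x_1)=0$, after which the monotonicity picture forces $w>0$ on $(0,x_1)$ and $w<0$ on $(x_1,1)$.

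The main obstacle is the case split on the convexity of $B$ in the second paragraph, but each subcase is settled by an endpoint check plus a single monotonicity argument. Everything else is a routine calculation once $w''>0$ is in hand; in particular, I expect no need to invoke the power-series Lemma~\ref{Lemma zp} here.
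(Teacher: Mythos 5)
Your proof is correct and follows essentially the same route as the paper: compute $w''$, show $w''>0$ on $(0,1)$ for all $p\in(0,1)$ by proving the bracket $B(x)=x^{2p}-p(p+1)x+(p^2-1)<0$, and then settle the three cases from $w'(0^{+})=-\infty$, $w'(1)=p(3p-2)$, $w(1)=4-6p$ and the convexity of $w$. The only difference is the sub-step $B<0$: the paper applies the weighted AM--GM bound $x^{p}<1-p+px$ and factors the resulting quadratic as $-p\left(px+(1-p)\right)(2-x)<0$, which avoids your (equally valid) case split on $p\ge 1/2$ versus $p<1/2$.
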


\begin{proof}
For $p\in \left( 0,1\right) $, differentiations give%
\begin{eqnarray}
w^{\prime }\left( x\right) &=&\frac{p}{x^{p+1}}\left( x^{2p}+\left(
p-1\right) x^{p+1}-\left( p-1\right) ^{2}+x\left( p+1\right) \left(
p-1\right) \right) ,  \label{dw} \\
w^{\prime }\left( 0^{+}\right) &=&-\infty \text{, \ \ \ }w^{\prime }\left(
1\right) =p\left( 3p-2\right) ,  \notag
\end{eqnarray}%
\begin{equation}
\frac{x^{p+2}}{p\left( p-1\right) }w^{\prime \prime }\left( x\right)
=x^{2p}+\left( p-1\right) \left( p+1\right) -x\left( p^{2}+p\right)
:=w_{1}(x).  \label{ddw}
\end{equation}

By arithmetic geometric mean inequality, we have%
\begin{equation*}
x^{p}<1-p+px,
\end{equation*}%
and then,%
\begin{eqnarray*}
w_{1}(x) &=&x^{2p}+\left( p-1\right) \left( p+1\right) -x\left(
p^{2}+p\right) \\
&<&\left( 1-p+px\right) ^{2}+\left( p-1\right) \left( p+1\right) -x\left(
p^{2}+p\right) \\
&=&-p\left( px+\left( 1-p\right) \right) \left( 2-x\right) <0\text{ for }%
x\in \left( 0,1\right) .
\end{eqnarray*}%
It is derived from (\ref{ddw}) that $w^{\prime \prime }\left( x\right) >0$
on $\left( 0,1\right) $ for $p\in \left( 0,1\right) $.

(i) Now we show that $w\left( x\right) >0$ if $p\in (0,2/3]$. The assertion
that $w^{\prime \prime }\left( x\right) >0$ on $\left( 0,1\right) $ yields
that $w^{\prime }\left( x\right) <w^{\prime }\left( 1\right) =p\left(
3p-2\right) <0$, and hence, $w\left( x\right) >w\left( 1\right) =-2\left(
3p-2\right) \geq 0$.

(ii) If $p\in (2/3,1)$, then by using the convexity of $w$ on $\left(
0,1\right) $ and noting that the facts $w\left( 0^{+}\right) =\infty $ and $%
w\left( 1\right) =-2\left( 3p-2\right) <0$, we see that there is a unique
number $x_{1}\in \left( 0,1\right) $ such that $w\left( x\right) >0$ for $%
x\in \left( 0,x_{1}\right) $ and $w\left( x\right) <0$ for $x\in \left(
x_{1},1\right) $.

(iii) If $p=1$, then $w\left( x\right) =x-3<0$ for $x\in \left( 0,1\right) $.

This completes the proof of this lemma.
\end{proof}

We now are in a position to prove Proposition \ref{P-IP-Y}.

\begin{proof}[Proof of Proposition \protect\ref{P-IP-Y}]
For $x\in \left( 0,1\right) $, we define%
\begin{equation*}
f\left( x\right) =\ln I\left( x,1\right) =\frac{x\ln x}{x-1}-1\text{, \ \ \ }%
g\left( x\right) =\ln A_{p}\left( x,1\right) =\frac{1}{p}\ln \frac{x^{p}+1}{2%
}.
\end{equation*}%
Differentiation gives%
\begin{eqnarray}
\frac{f^{\prime }\left( x\right) }{g^{\prime }\left( x\right) } &=&\frac{%
x+x^{1-p}}{\left( x-1\right) ^{2}}\left( x-1-\ln x\right)  \notag \\
&=&\frac{x-1-\ln x}{\left( x-1\right) ^{2}/\left( x+x^{1-p}\right) }:=\frac{%
f_{1}\left( x\right) }{g_{1}\left( x\right) },  \notag \\
\frac{f_{1}^{\prime }\left( x\right) }{g_{1}^{\prime }\left( x\right) } &=&%
\frac{x^{p-1}\left( x+x^{1-p}\right) ^{2}}{x^{p+1}+x^{p}+\left( p+1\right)
x+1-p},  \notag \\
\left( \frac{f_{1}^{\prime }\left( x\right) }{g_{1}^{\prime }\left( x\right) 
}\right) ^{\prime } &=&\frac{\left( x^{p}+1\right) \times w\left( x\right) }{%
\left( x^{p+1}+x^{p}+\left( p+1\right) x+1-p\right) ^{2}},
\label{d(df1/dg1)}
\end{eqnarray}%
where $w\left( x\right) $ is defined by (\ref{w}).

(i) In the case when $p\in (-\infty ,2/3]$ or $p\in \lbrack 1,\infty )$. We
first prove inequalities ((\ref{I-new1})) holds for $p\in (0,2/3]$. By Lemma %
\ref{L-sgnw} we see that $w\left( x\right) >0$ on $(0,1)$, which shows that $%
f_{1}^{\prime }/g_{1}^{\prime }$ is increasing on $\left( 0,1\right) $.
Since $f_{1}\left( 1^{-}\right) =g_{1}\left( 1^{-}\right) =0$, by Theorem %
\ref{T-Anderson} or \ref{T-Pinelis1}, we see that $f_{1}/g_{1}=f^{\prime
}/g^{\prime }$ is also increasing on $\left( 0,1\right) $, which in turn
implies that $f/g$ is so. Consequently, we conclude that%
\begin{equation*}
\frac{p}{\ln 2}=\lim_{x\rightarrow 0^{+}}\frac{f\left( x\right) }{g\left(
x\right) }<\frac{f\left( x\right) }{g\left( x\right) }=\frac{\frac{x\ln x}{%
x-1}-1}{\frac{1}{p}\ln \frac{x^{p}+1}{2}}<\lim_{x\rightarrow 1^{-}}\frac{%
f\left( x\right) }{g\left( x\right) }=1,
\end{equation*}%
which proves (\ref{I-new1}).

Since $p\mapsto A_{p}\left( x,1\right) $ is increasing on $\mathbb{R}$, and $%
p\mapsto A_{p}\left( x,1\right) ^{p/\ln 2}=\left( \left( x^{p}+1\right)
/2\right) ^{1/\ln 2}$ is clearly decreasing on $\mathbb{R}$ for $x\in \left(
0,1\right) $, the double inequality (\ref{I-new1}) is still valid for $p\in
(-\infty ,0]$.

In the same argument, in order to show that the reverse of (\ref{I-new1})
holds in the case when $p\geq 1$, it suffices to consider the case $p=1$. In
fact, by Lemma \ref{L-sgnw} we have $w\left( x\right) <0$ for $x\in \left(
0,1\right) $. Similar to part one of this proof, we easily deduce that $f/g$
is decreasing on $\left( 0,1\right) $, which yields 
\begin{equation*}
1=\lim_{x\rightarrow 1^{-}}\frac{f\left( x\right) }{g\left( x\right) }<\frac{%
f\left( x\right) }{g\left( x\right) }=\frac{\frac{x\ln x}{x-1}-1}{\ln \frac{%
x+1}{2}}<\lim_{x\rightarrow 0^{+}}\frac{f\left( x\right) }{g\left( x\right) }%
=\frac{1}{\ln 2},
\end{equation*}%
that is, the reverse of (\ref{I-new1}) holds when $p=1$.

(ii) In the case when $p\in \left( 2/3,1\right) $. By Lemma \ref{L-sgnw},
there is a unique number $x_{1}\in \left( 0,1\right) $ such that $w\left(
x\right) >0$ for $x\in \left( 0,x_{1}\right) $ and $w\left( x\right) <0$ for 
$x\in \left( x_{1},1\right) $. It follows from (\ref{d(df1/dg1)}) that $%
f_{1}^{\prime }/g_{1}^{\prime }$ is increasing on $\left( 0,x_{1}\right) $
and decreasing on $\left( x_{1},1\right) $. Simple verifications give%
\begin{equation*}
g_{1}^{\prime }\left( x\right) =\frac{\left( x-1\right) \left(
x^{p+1}+x^{p}+\left( p+1\right) x+1-p\right) }{x^{p}\left( x+x^{1-p}\right)
^{2}}<0\text{ on }\left( 0,1\right)
\end{equation*}%
and%
\begin{eqnarray*}
H_{f_{1},g_{1}}\left( x\right) &:&=\frac{f_{1}^{\prime }\left( x\right) }{%
g_{1}^{\prime }\left( x\right) }g_{1}\left( x\right) -f_{1}\left( x\right) \\
&=&\frac{x^{p-1}\left( x+x^{1-p}\right) ^{2}}{x^{p+1}+x^{p}+\left(
p+1\right) x+1-p}\frac{\left( x-1\right) ^{2}}{x+x^{1-p}}-\left( x-1-\ln
x\right) \\
&=&\frac{\left( x^{p}+1\right) \left( x-1\right) ^{2}}{x^{p+1}+x^{p}+\left(
p+1\right) x+1-p}-\left( x-1-\ln x\right) ,
\end{eqnarray*}%
and then, $H_{f_{1},g_{1}}\left( 1^{-}\right) =0$ and $\lim_{x\rightarrow
0^{+}}H_{f_{1},g_{1}}\left( x\right) =-\infty $. Utilizing part (ii) of
Theorem \ref{T-Pinelis4-Yb} we deduce that there is a unique number $%
x_{1}^{\ast }\in \left( 0,1\right) $ such that $f_{1}/g_{1}\left( =f^{\prime
}/g^{\prime }\right) $ is increasing on $\left( 0,x_{1}^{\ast }\right) $ and
decreasing on $\left( x_{1}^{\ast },1\right) $.

It is easy to check that $g^{\prime }\left( x\right) =x^{p-1}/\left(
x^{p}+1\right) >0$ on $\left( 0,1\right) $ and clearly, $f\left(
1^{-}\right) =g\left( 1^{-}\right) =0$. Also, we have%
\begin{eqnarray*}
H_{f,g}\left( x\right) &:&=\frac{\left( x+x^{1-p}\right) \left( x-1-\ln
x\right) }{\left( x-1\right) ^{2}}\frac{1}{p}\ln \frac{x^{p}+1}{2}-\left( 
\frac{x\ln x}{x-1}-1\right) \\
&=&\frac{\left( x^{2/3}+x^{2/3-p}\right) \left( \left( x-1\right)
x^{1/3}-x^{1/3}\ln x\right) }{\left( x-1\right) ^{2}}\frac{1}{p}\ln \frac{%
x^{p}+1}{2}-\left( \frac{x\ln x}{x-1}-1\right) ,
\end{eqnarray*}%
and then $H_{f,g}\left( 1^{-}\right) =0$ and $\lim_{x\rightarrow
0^{+}}H_{f,g}\left( x\right) =1>0$. Utilizing part (ii) of Theorem \ref%
{T-Pinelis4-Yb} again, we infer that there is a unique number $x_{0}\in
\left( 0,1\right) $ such that $f/g$ is increasing on $\left( 0,x_{0}\right) $
and decreasing on $\left( x_{0},1\right) $. Then, we obtain that 
\begin{eqnarray*}
\frac{p}{\ln 2} &=&\lim_{x\rightarrow 0^{+}}\frac{f\left( x\right) }{g\left(
x\right) }<\frac{f\left( x\right) }{g\left( x\right) }<\frac{f\left(
x_{0}\right) }{g\left( x_{0}\right) }\text{ for }x\in \left( 0,x_{0}\right) ,
\\
1 &=&\lim_{x\rightarrow 1^{-}}\frac{f\left( x\right) }{g\left( x\right) }<%
\frac{f\left( x\right) }{g\left( x\right) }<\frac{f\left( x_{0}\right) }{%
g\left( x_{0}\right) }\text{ for }x\in \left( x_{0},1\right) ,
\end{eqnarray*}%
that is,%
\begin{equation*}
\delta _{1}\left( p\right) =\min_{p\in \left( 2/3,1\right) }\left( \frac{p}{%
\ln 2},1\right) <\frac{\frac{x\ln x}{x-1}-1}{\frac{1}{p}\ln \frac{x^{p}+1}{2}%
}\leq \frac{\frac{x_{0}\ln x_{0}}{x_{0}-1}-1}{\frac{1}{p}\ln \frac{%
x_{0}^{p}+1}{2}}=\delta _{0}\left( p\right) \text{ for all }x\in \left(
0,1\right) ,
\end{equation*}%
which proves (\ref{I-new2}).

In particular, letting $p=\ln 2$ yields (\ref{I-new3}). Solving the equation
(\ref{H=0-I}) by mathematical software, we find that $x_{0}\approx 0.0463812$
and $\delta _{0}\left( \ln 2\right) \approx 1.0154$.

This proposition is proved.
\end{proof}

\begin{corollary}
\label{C-IP-Y}For $p\in (0,2/3]$, the inequalities%
\begin{equation}
A_{p}\left( x,1\right) <I\left( x,1\right) <A_{p}\left( x,1\right) ^{p/\ln
2}<e^{-1}2^{1/p}A_{p}\left( x,1\right)  \label{I-new4}
\end{equation}%
hold for $x\in \left( 0,1\right) $ with the best coefficients $1$ and $%
e^{-1}2^{1/p}$. Also, the first and third members in (\ref{I-new4}) are
respectively increasing and decreasing in $p$ on $\mathbb{R}$, while the
fourth one is decreasing in $p$ on $\left( 0,\infty \right) $. For $p\in
\lbrack 1,\infty )$, (\ref{I-new4}) are reversed.
\end{corollary}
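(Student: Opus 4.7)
The plan is as follows. First I observe that the first two inequalities of \eqref{I-new4}, together with the sharpness of the exponents $1$ and $p/\ln 2$, are exactly what Proposition~\ref{P-IP-Y}(i) provides. Thus the only new content to prove is: (a) the third inequality $A_p(x,1)^{p/\ln 2}<e^{-1}2^{1/p}A_p(x,1)$; (b) sharpness of the coefficient $e^{-1}2^{1/p}$; (c) the stated monotonicities in $p$ of the four members; and (d) the parallel reversed version on $[1,\infty)$.

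For (a) and (d) I would carry out a purely algebraic reduction. Substituting $A_p(x,1)=((x^p+1)/2)^{1/p}$ and taking logarithms, the target inequality in (a) is equivalent to
\[
\left(\frac{p}{\ln 2}-1\right)\ln A_p(x,1)<-1+\frac{\ln 2}{p},
\]
and using $\ln A_p(x,1)=\tfrac{1}{p}\left(\ln(x^p+1)-\ln 2\right)$ the constant terms cancel and this collapses to
\[
\left(\frac{1}{\ln 2}-\frac{1}{p}\right)\ln(x^p+1)<0.
\]
For $x\in(0,1)$ and $p>0$ one has $\ln(x^p+1)>0$, so the inequality is equivalent to the single condition $p<\ln 2$. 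The hypothesis $p\in(0,2/3]$ yields $p<\ln 2$, proving (a); while $p\in[1,\infty)$ yields $p>\ln 2$, which gives the reverse and, combined with the reversed first two inequalities from Proposition~\ref{P-IP-Y}(i), proves (d).

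For (b) I would read the sharp coefficient off the endpoint limit. Because $x\mapsto A_p(x,1)$ is monotone on $(0,1)$, so is $A_p(x,1)^{p/\ln 2-1}$, and its extreme value is the limit at $x\to 0^+$. Since $A_p(x,1)\to 2^{-1/p}$ and $2^{-1/\ln 2}=e^{-1}$,
\[
A_p(x,1)^{p/\ln 2-1}\longrightarrow 2^{-(1/\ln 2-1/p)}=e^{-1}2^{1/p},
\]
which is the supremum on $(0,2/3]$ and the infimum on $[1,\infty)$; in both cases $e^{-1}2^{1/p}$ is the best coefficient. For (c), $p\mapsto A_p(x,1)$ is increasing on $\mathbb{R}$ by the classical monotonicity of power means; the third member $A_p(x,1)^{p/\ln 2}=((x^p+1)/2)^{1/\ln 2}$ is decreasing in $p$ because for $x\in(0,1)$ the map $p\mapsto x^p$ is decreasing (its derivative is $x^p\ln x<0$) while the outer exponent $1/\ln 2>0$ preserves direction; and the fourth member $e^{-1}2^{1/p}A_p(x,1)=e^{-1}(x^p+1)^{1/p}$ is $e^{-1}$ times the $\ell^p$-norm of the positive vector $(x,1)$, which is well known to be decreasing in $p$ on $(0,\infty)$.

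Since the whole statement reduces to one short algebraic identity plus standard limits, I do not expect any serious obstacle; the only point that needs care is bookkeeping of the sign of $1/\ln 2-1/p$, so that the threshold $p=\ln 2$ simultaneously reverses the third inequality and converts the supremum at $x\to 0^+$ into an infimum, keeping $e^{-1}2^{1/p}$ sharp in both the $(0,2/3]$ and $[1,\infty)$ cases.
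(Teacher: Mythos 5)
Your reduction of the third inequality is exactly the paper's: after taking logarithms the difference of the fourth and third members collapses to $\left(\tfrac{1}{p}-\tfrac{1}{\ln 2}\right)\ln\left(x^{p}+1\right)$, whose sign is governed solely by the position of $p$ relative to $\ln 2$, with $2/3<\ln 2<1$ doing the rest; combined with Proposition \ref{P-IP-Y}(i) for the first two members this is the same argument. For the monotonicity in $p$ you are actually more self-contained than the paper: the paper merely refers back to the proof of Proposition \ref{P-IP-Y} for the first and third members and cites Lemma 6 of an external reference for the fourth, whereas your observations that $A_{p}\left(x,1\right)^{p/\ln 2}=\left(\left(x^{p}+1\right)/2\right)^{1/\ln 2}$ is decreasing in $p$ (since $x^{p}$ is) and that $e^{-1}2^{1/p}A_{p}\left(x,1\right)=e^{-1}\left(x^{p}+1\right)^{1/p}$ is a scaled $\ell^{p}$-norm, hence decreasing in $p$, prove those claims directly.

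The one genuine gap is in your sharpness step (b). The claim ``best coefficients $1$ and $e^{-1}2^{1/p}$'' concerns the bounds on $I$, i.e.\ that neither $1\cdot A_{p}<I$ nor $I<e^{-1}2^{1/p}A_{p}$ can be improved; the paper proves this from the limits $\lim_{x\rightarrow 1^{-}}I\left(x,1\right)/A_{p}\left(x,1\right)=1$ and $\lim_{x\rightarrow 0^{+}}I\left(x,1\right)/A_{p}\left(x,1\right)=e^{-1}2^{1/p}$. What you compute is $\lim_{x\rightarrow 0^{+}}A_{p}\left(x,1\right)^{p/\ln 2-1}=e^{-1}2^{1/p}$, which only shows that the \emph{third} inequality is tight and yields merely $\limsup_{x\rightarrow 0^{+}}I/A_{p}\leq e^{-1}2^{1/p}$; it does not by itself exclude a smaller coefficient $c<e^{-1}2^{1/p}$ with $I<cA_{p}$, and you never address optimality of the coefficient $1$ at all (the Proposition gives best \emph{exponents}, a different statement). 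The fix is immediate and is exactly the paper's route: since $I\left(x,1\right)\rightarrow e^{-1}$ and $A_{p}\left(x,1\right)\rightarrow 2^{-1/p}$ as $x\rightarrow 0^{+}$, and $I/A_{p}\rightarrow 1$ as $x\rightarrow 1^{-}$, both coefficients are attained as limits of $I/A_{p}$ and are therefore best.
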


\begin{proof}
To prove (\ref{I-new4}) holds, it is enough to prove that $\ln \left(
e^{-1}2^{1/p}A_{p}\left( x,1\right) \right) -\ln A_{p}\left( x,1\right)
^{p/\ln 2}>\left( <\right) 0$ for $p\in (0,2/3]$ ($[1,\infty )$). Indeed,
simplifying yields%
\begin{equation*}
\ln \left( e^{-1}2^{1/p}A_{p}\left( x,1\right) \right) -\ln A_{p}\left(
x,1\right) ^{p/\ln 2}=\frac{\ln 2-p}{p\ln 2}\left( \ln 2+p\ln A_{p}\left(
x,1\right) \right) =\frac{\ln 2-p}{p\ln 2}\ln \left( x^{p}+1\right) ,
\end{equation*}%
which is obviously positive if $p\in (0,2/3]$ and negative if $p\geq 1$ for $%
x\in \left( 0,1\right) $.

The following limit relations%
\begin{equation*}
\lim_{x\rightarrow 1^{-}}\frac{I\left( x,1\right) }{A_{p}\left( x,1\right) }%
=1\text{ \ and \ }\lim_{x\rightarrow 0^{+}}\frac{I\left( x,1\right) }{%
A_{p}\left( x,1\right) }=e^{-1}2^{1/p}
\end{equation*}%
show that the coefficients in (\ref{I-new4}), that is, $1$ and $%
e^{-1}2^{1/p} $, are the best.

As mentioned in proof of Proposition \ref{P-IP-Y}, the first and third
members in (\ref{I-new4}) are respectively increasing and decreasing in $p$
on $\mathbb{R}$. While the decreasing property of the fourth one in $p$ on $%
\left( 0,\infty \right) $ follows from Lemma 6 in \cite{Yang-JMI-7(4)-2013E}.
\end{proof}

\subsection{A supplement to a result proved by Zhu.}

Zhu was early aware of the special role of the L'Hospital monotonicity rule
in establishing new inequalities for trigonometric, for example, as early as
2004, he \cite{Zhu-AML-19-2006} first used the LMR to give a sharpening
Jordan's inequality. After this, he published a series of results for
trigonometric and hyperbolic functions as well as bivariate means (see \cite%
{Zhu-JIA-2007-67430}, \cite[Theorem 1]{Zhu-CMA-58-2009}, \cite%
{Zhu-AAA-2009-485842}). In 2009, he \cite[Theorem 1]{Zhu-CMA-58-2009} (see
also \cite{Zhu-AAA-2009-485842}) established a general result related to
Cusa-type inequalities. For own convenience, we record it as follows.

\noindent \textbf{Theorem Zhu (}\cite{Zhu-CMA-58-2009}\textbf{)}\label{Zhu}%
\emph{Let }$0<x<\pi /2$\emph{. Then}

\emph{(i) if }$p\geq 1$\emph{, the double inequality}%
\begin{equation}
1-\xi +\xi \left( \cos x\right) ^{p}<\left( \frac{\sin x}{x}\right)
^{p}<1-\eta +\eta \left( \cos x\right) ^{p}  \label{Zhu1}
\end{equation}%
\emph{holds if and only if }$\eta \leq 1/3$\emph{\ and }$\xi \geq 1-\left(
2/\pi \right) ^{p}$\emph{;}

\emph{(ii) if }$0\leq p\leq 4/5$\emph{, the double inequality}%
\begin{equation}
1-\eta +\eta \left( \cos x\right) ^{p}<\left( \frac{\sin x}{x}\right)
^{p}<1-\xi +\xi \left( \cos x\right) ^{p}  \label{Zhu2}
\end{equation}%
\emph{holds if and only if }$\eta \geq 1/3$\emph{\ and }$\xi \leq 1-\left(
2/\pi \right) ^{p}$\emph{'}

\emph{(iii) if }$p<0$\emph{, the inequality}%
\begin{equation*}
\left( \frac{\sin x}{x}\right) ^{p}<1-\eta +\eta \left( \cos x\right) ^{p}
\end{equation*}%
\emph{holds if and only if }$\eta \geq 1/3$\emph{.}

Now we give a supplement to Theorem \ref{Zhu} in the case when $p\in \left(
4/5,1\right) $.

\begin{proposition}
Let $x\in (0,\pi /2)$ and $p\in \left( 4/5,1\right) $. Then the double
inequality%
\begin{equation}
\theta _{0}\left( p\right) \left( \cos x\right) ^{p}+1-\theta _{0}\left(
p\right) \leq \left( \frac{\sin x}{x}\right) ^{p}<\theta _{1}\left( p\right)
\left( \cos x\right) ^{p}+1-\theta _{1}\left( p\right)  \label{Z-Y1}
\end{equation}%
holds true, where%
\begin{equation*}
\theta _{0}\left( p\right) =\frac{\left( \frac{\sin x_{0}}{x_{0}}\right)
^{p}-1}{\left( \cos x_{0}\right) ^{p}-1}\text{ \ and \ }\theta _{1}\left(
p\right) =\min_{p\in \left( 4/5,1\right) }\left( 1-\left( \frac{2}{\pi }%
\right) ^{p},\frac{1}{3}\right)
\end{equation*}%
are the best constants, here $x_{0}$ is the unique root of the equation%
\begin{equation}
\left( \frac{\sin x}{x\cos x}\right) ^{p-1}\frac{\sin x-x\cos x}{x^{2}\sin x}%
\left( \left( \cos x\right) ^{p}-1\right) -\left( \left( \frac{\sin x}{x}%
\right) ^{p}-1\right) =0  \label{H=0-t}
\end{equation}%
on $(0,\pi /2)$. Particularly, when $1-\left( 2/\pi \right) ^{p}=1/3$, that
is,%
\begin{equation*}
p=p_{0}=\frac{\ln 3-\ln 2}{\ln \pi -\ln 2}\approx 0.89788,\text{ (see \cite%
{Yang-MIA-17(2)-2014})}
\end{equation*}%
we have%
\begin{equation}
\theta _{0}\left( p_{0}\right) \left( \cos x\right) ^{p_{0}}+1-\theta
_{0}\left( p_{0}\right) <\left( \frac{\sin x}{x}\right) ^{p_{0}}\leq \frac{1%
}{3}\left( \cos x\right) ^{p_{0}}+\frac{2}{3},  \label{Z-Y2}
\end{equation}%
where $\theta _{0}\left( p_{0}\right) \approx 0.33334$.
\end{proposition}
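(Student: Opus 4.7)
The plan is to recast the double inequality (\ref{Z-Y1}) as a sharp two-sided bound $\theta_{1}(p) < F(x) \leq \theta_{0}(p)$ on the quotient
\[
F(x) = \frac{f(x)}{g(x)}, \qquad f(x) = \left(\frac{\sin x}{x}\right)^{p} - 1, \ \ g(x) = (\cos x)^{p} - 1,
\]
and then derive the bound from the LPMR (Theorem~\ref{T-Pinelis4-Ya}). On $(0, \pi/2)$ we have $f(0^{+}) = g(0^{+}) = 0$, $g(x) < 0$, and $g'(x) = -p(\cos x)^{p-1} \sin x < 0$. A Taylor expansion gives $F(0^{+}) = 1/3$ (and in fact $F(x) = 1/3 + (5p-4)x^{2}/90 + O(x^{4})$, so $F > 1/3$ for small $x > 0$ when $p > 4/5$), while direct substitution gives $F((\pi/2)^{-}) = 1 - (2/\pi)^{p}$. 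Since $g(x) < 0$, each side of (\ref{Z-Y1}) flips sign when converted to a bound on $F$, and $\theta_{1}(p) = \min(1/3, 1 - (2/\pi)^{p})$ is precisely the smaller of the two boundary values.

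Next I would compute the derived ratio
\[
\frac{f'(x)}{g'(x)} = \left(\frac{\sin x}{x \cos x}\right)^{p-1} \frac{\sin x - x \cos x}{x^{2} \sin x},
\]
whose endpoint values are $(f'/g')(0^{+}) = 1/3$ (by Taylor expansion) and $(f'/g')((\pi/2)^{-}) = 0$, the latter because the factor $(\sin x/(x\cos x))^{p-1} \to 0$ as $x \to (\pi/2)^{-}$ when $p - 1 < 0$. The central obstacle is to show that $f'/g'$ has the pattern $\nearrow \searrow$ on $(0, \pi/2)$ for every $p \in (4/5, 1)$. I would attempt this by logarithmic differentiation, writing
\[
\ln \frac{f'}{g'} = (p-1) \ln \frac{\tan x}{x} + \ln \frac{\tan x - x}{x^{2} \tan x},
\]
and exploiting that the resulting derivative is affine in $p$; an alternative is to clear denominators and analyse the sign of the resulting trigonometric expression by a power-series argument in the spirit of Lemma~\ref{Lemma zp}. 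This step is the delicate part, because $(4/5, 1)$ sits precisely between the two monotone regimes already handled by Zhu.

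Once the pattern $\nearrow \searrow$ of $f'/g'$ is secured, Remark~\ref{Remark H(a+)=H(b-)=0} gives $H_{f,g}(0^{+}) = 0$, and a direct computation yields
\[
H_{f,g}((\pi/2)^{-}) = 0 \cdot (-1) - \bigl((2/\pi)^{p} - 1\bigr) = 1 - (2/\pi)^{p} > 0.
\]
With $g' < 0$, $g < 0$, $f'/g'$ in pattern $\nearrow \searrow$, and $\operatorname{sgn} g' \cdot \operatorname{sgn} H_{f,g}((\pi/2)^{-}) < 0$, the sixth row of Table 3 in Theorem~\ref{T-Pinelis4-Ya}(ii) supplies a unique $x_{0} \in (0, \pi/2)$ such that $F$ is increasing on $(0, x_{0})$ and decreasing on $(x_{0}, \pi/2)$. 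Hence $\max F = F(x_{0}) = \theta_{0}(p)$ and $\inf F = \min(F(0^{+}), F((\pi/2)^{-})) = \theta_{1}(p)$, producing $\theta_{1}(p) < F(x) \leq \theta_{0}(p)$ on $(0, \pi/2)$. The constant $\theta_{0}(p)$ is attained at $x_{0}$ (so $\leq$ is sharp) and $\theta_{1}(p)$ is approached in the limit at one endpoint (so the strict inequality is sharp).

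Finally, the critical point $x_{0}$ is characterised by $H_{f,g}(x_{0}) = 0$, which, after substituting the formulas for $f'/g'$, $g$, and $f$, is exactly equation (\ref{H=0-t}). Specialising to $p = p_{0} = (\ln 3 - \ln 2)/(\ln \pi - \ln 2)$ balances the two boundary values at $1/3$ and delivers (\ref{Z-Y2}); a numerical solution of (\ref{H=0-t}) at $p_{0}$ yields the approximation $\theta_{0}(p_{0}) \approx 0.33334$.
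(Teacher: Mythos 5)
Your overall route coincides with the paper's: same choice of $f(x)=(\sin x/x)^{p}-1$, $g(x)=(\cos x)^{p}-1$, same endpoint data $H_{f,g}(0^{+})=0$, $H_{f,g}\bigl((\pi/2)^{-}\bigr)=1-(2/\pi)^{p}>0$, the same application of the LPMR with $f(a^{+})=g(a^{+})=0$ (you correctly invoke Theorem~\ref{T-Pinelis4-Ya}, row six of Table 3; the paper's citation of Theorem~\ref{T-Pinelis4-Yb} at that point is a slip, since the vanishing is at the left endpoint), and the same identification of $x_{0}$ through $H_{f,g}(x_{0})=0$, i.e.\ equation (\ref{H=0-t}).

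However, there is a genuine gap: you never establish the one piece on which the whole argument rests, namely that $f'/g'$ is increasing then decreasing on $(0,\pi/2)$ for every $p\in(4/5,1)$. You explicitly label it ``the delicate part'' and offer two unexecuted strategies (logarithmic differentiation exploiting affinity in $p$, or clearing denominators and a series argument in the spirit of Lemma~\ref{Lemma zp}); neither is carried out, and the second is not routine because $p$ enters both as an exponent and linearly, so Lemma~\ref{Lemma zp} does not apply directly. The paper closes exactly this step by computing
\begin{equation*}
\left( \frac{f^{\prime }}{g^{\prime }}\right) ^{\prime }=\left( \frac{\sin x}{x\cos x}\right) ^{p-2}\frac{\left( x-\cos x\sin x\right) \left( \sin x-x\cos x\right) }{\sin x\cos ^{2}x}\left( p-G\left( x\right) \right) ,
\qquad
G\left( x\right) =\frac{x\sin x+\cos x\sin ^{2}x-2x^{2}\cos x}{\left( x-\cos x\sin x\right) \left( \sin x-x\cos x\right) },
\end{equation*}
and then quoting Zhu's proof that $G$ is increasing on $(0,\pi/2)$ with $G(0^{+})=4/5$ and $G\bigl((\pi/2)^{-}\bigr)=1$; hence for $p\in(4/5,1)$ the factor $p-G(x)$ changes sign exactly once, from positive to negative, which is precisely why the exceptional range $(4/5,1)$ appears. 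Your logarithmic-differentiation idea is in fact equivalent to this factorization (the zero set of the $p$-affine derivative is $p=G(x)$), but without proving the monotonicity of $G$ (or citing Zhu's result for it) the pattern $\nearrow\searrow$ of $f'/g'$, and with it the applicability of the LPMR, is unsupported. Everything downstream of that point in your proposal (the monotone pattern of $f/g$, sharpness of $\theta_{0}(p)$ at $x_{0}$ and of $\theta_{1}(p)=\min\bigl(1/3,\,1-(2/\pi)^{p}\bigr)$ at the endpoints, and the specialization $p=p_{0}$ giving (\ref{Z-Y2})) is correct and matches the paper.
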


\begin{proof}
For $x\in (0,\pi /2)$, we define $f$ and $g$ as%
\begin{equation*}
f\left( x\right) =\left( \frac{\sin x}{x}\right) ^{p}-1\text{ \ and \ }%
g\left( x\right) =\left( \cos x\right) ^{p}-1.
\end{equation*}%
Then%
\begin{equation*}
\frac{f^{\prime }\left( x\right) }{g^{\prime }\left( x\right) }=\left( \frac{%
\sin x}{x\cos x}\right) ^{p-1}\frac{\sin x-x\cos x}{x^{2}\sin x}
\end{equation*}%
and%
\begin{equation*}
\left( \frac{f^{\prime }\left( x\right) }{g^{\prime }\left( x\right) }%
\right) ^{\prime }=\left( \frac{\sin x}{x\cos x}\right) ^{p-2}\frac{\left(
x-\cos x\sin x\right) \left( \sin x-x\cos x\right) }{\sin x\cos ^{2}x}\left(
p-G\left( x\right) \right) ,
\end{equation*}%
where%
\begin{equation*}
G\left( x\right) =\frac{x\sin x+\cos x\sin ^{2}x-2x^{2}\cos x}{\left( x-\cos
x\sin x\right) \left( \sin x-x\cos x\right) }.
\end{equation*}%
(By the way, in \cite[(21)]{Zhu-CMA-58-2009}, \cite[(4.1)]%
{Zhu-AAA-2009-485842}, the denominator of the second member on the right
hand side should be $\sin x\cos ^{2}x$ instead of $x^{4}\sin x\cos ^{2}x$.
Fortunately, this clerical error did not interfere with the correctness of
Theorem 1.)

It has been shown in \cite[Proof of Theorem 1]{Zhu-CMA-58-2009} that the
function $G$ is increasing on $\left( 0,\pi /2\right) $, and $G\left(
0^{+}\right) =4/5$ and $G\left( \pi /2^{-}\right) =1$. Therefore, the
function $x\mapsto \left( p-G\left( x\right) \right) :=G_{1}\left( x\right) $
is decreasing on $\left( 0,\pi /2\right) $, and $G_{1}\left( 0^{+}\right)
=p-4/5>0$ and $G_{1}\left( \pi /2^{-}\right) =p-1<0$, which reveals that
there is a $x_{1}\in \left( 0,\pi /2\right) $ such that $G_{1}\left(
x\right) >0$ for $x\in \left( 0,x_{1}\right) $ and $G_{1}\left( x\right) <0$
for $x\in \left( x_{1},\pi /2\right) $. Due to $\left( x-\cos x\sin x\right)
>0$ and $\left( \sin x-x\cos x\right) >0$ for $x\in \left( 0,\pi /2\right) $%
, this implies that $f^{\prime }/g^{\prime }$ is increasing on $\left(
0,x_{1}\right) $ and decreasing on $\left( x_{1},\pi /2\right) $.

It is easy to check that $g^{\prime }\left( x\right) =-p\left( \cos x\right)
^{p-1}\sin x<0$, $f\left( 0^{+}\right) =g\left( 0^{+}\right) =0$, and%
\begin{eqnarray*}
H_{f,g}\left( x\right) &=&\frac{f^{\prime }\left( x\right) }{g^{\prime
}\left( x\right) }g\left( x\right) -f\left( x\right) \\
&=&\left( \frac{\sin x}{x\cos x}\right) ^{p-1}\frac{\sin x-x\cos x}{%
x^{2}\sin x}\left( \left( \cos x\right) ^{p}-1\right) -\left( \left( \frac{%
\sin x}{x}\right) ^{p}-1\right) ,
\end{eqnarray*}%
and then, $H_{f,g}\left( 0^{+}\right) =0$, $H_{f,g}\left( \pi /2^{-}\right)
=1-\left( 2/\pi \right) ^{p}>0$. Employing part (ii) of Theorem \ref%
{T-Pinelis4-Yb}, we conclude that there is a $x_{0}\in \left( 0,\pi
/2\right) $ such that $f/g$ is increasing on $\left( 0,x_{0}\right) $ and
decreasing on $\left( x_{0},\pi /2\right) $. It is derived that%
\begin{eqnarray*}
\frac{1}{3} &=&\lim_{x\rightarrow 0^{+}}\frac{f\left( x\right) }{g\left(
x\right) }<\frac{f\left( x\right) }{g\left( x\right) }<\frac{f\left(
x_{0}\right) }{g\left( x_{0}\right) }\text{ for }x\in \left( 0,x_{0}\right) ,
\\
1-\left( \frac{2}{\pi }\right) ^{p} &=&\lim_{x\rightarrow \pi /2^{-}}\frac{%
f\left( x\right) }{g\left( x\right) }<\frac{f\left( x\right) }{g\left(
x\right) }<\frac{f\left( x_{0}\right) }{g\left( x_{0}\right) }\text{ for }%
x\in \left( x_{0},\pi /2\right) ,
\end{eqnarray*}%
which leads us to%
\begin{equation*}
\theta _{1}\left( p\right) =\min_{p\in \left( 4/5,1\right) }\left( 1-\left( 
\frac{2}{\pi }\right) ^{p},\frac{1}{3}\right) <\frac{\left( \frac{\sin x}{x}%
\right) ^{p}-1}{\left( \cos x\right) ^{p}-1}\leq \frac{\left( \frac{\sin
x_{0}}{x_{0}}\right) ^{p}-1}{\left( \cos x_{0}\right) ^{p}-1}=\theta
_{0}\left( p\right)
\end{equation*}%
for all $x\in \left( 0,1\right) $, that is, the double inequality (\ref{Z-Y1}%
) holds true.

When $p=p_{0}=\left( \ln 3-\ln 2\right) /\left( \ln \pi -\ln 2\right) $, $%
\theta _{1}\left( p\right) =1/3$, which yields (\ref{Z-Y2}). Solving the
equation (\ref{H=0-t}) by mathematical software, we find that $x_{0}\approx
0.0000003658089313760$ and $\theta _{0}\left( p_{0}\right) \approx 0.33334$.

This completes the proof of this proposition.
\end{proof}

\end{document}